\DeclareMathOperator*{\argmin}{arg\,min}
\begin{document}
\title{A generalised log-determinant regularizer for online semi-definite programming and its applications\thanks{This work was supported by JSPS KAKENHI Grant Numbers JP19H04174 and JP19H04067, respectively.}}
\titlerunning{A generalised log-determinant regularizer for online semi-definite programming}
%
\author{
Yaxiong Liu\inst{1,3}\and Ken-ichiro Moridomi\inst{4}
 \and Kohei Hatano\inst{2,3} \and  Eiji Takimoto\inst{1}\\
 \institute{Department of Informatics, Kyushu University, Japan\and Faculty of Arts and Science, Kyushu University, Japan \and RIKEN AIP, Japan\and SMN Corporation}
 \email{\{yaxiong.liu,hatano,eiji\}@inf.kyushu-u.ac.jp\inst{1}\inst{2}\inst{3} kenichiro\_moridomi@so-netmedia.jp \inst{4}}
}

\authorrunning{Yaxiong Liu\and Ken-ichiro Moridomi \and Kohei Hatano \and Eiji Takimoto}

\maketitle             

\begin{abstract}
We consider a variant of online semi-definite programming problem (OSDP): The decision space consists of semi-definite matrices with bounded $\bm{\Gamma}$-trace norm, which is a generalization of trace norm defined by a positive definite matrix $\bm{\Gamma}.$ To solve this problem, we utilise the follow-the-regularized-leader algorithm with a $\bm{\Gamma}$-dependent log-determinant regularizer. Then we apply our generalised setting and our proposed algorithm to online matrix completion(OMC) and online similarity prediction with side information. In particular, we reduce the online matrix completion problem to the generalised OSDP problem, and the side information is represented as the $\bm{\Gamma}$ matrix. Hence, due to our regret bound for the generalised OSDP, we obtain an optimal mistake bound for the OMC by removing the logarithmic factor.
\end{abstract}

\section{Introduction}
Online semi-definite programming(OSDP)\cite{hazan2012near} plays a central role in online learning with matrix. Usually, OSDP is given as follows: on round $t\in [T],$ algorithm predicts a matrix $\bm{W}_{t} \in \mathcal{K},$ then adversary gives a loss matrix $\bm{L}_{t},$ and the algorithm incurs the Frobenius inner product of $\bm{W}_{t}$ and $\bm{L}_{t}$ as $\bm{W}_{t} \bullet \bm{L}_{t}.$ Our goal is to minimize the regret defined as \begin{equation}
\mathrm{Regret}_{T}=\sum_{t=1}^{T}\bm{W}_{t} \bullet \bm{L}_{t}-\min_{\bm{W} \in \mathcal{K}}\sum_{t=1}^{T}\bm{W}\bullet \bm{L}_{t},
\end{equation}
where $\mathcal{K}$ is a set of positive definite matrices with bounded trace norm $\mathrm{Tr}(\bm{W}) \leq \tau, \bm{W} \in \mathcal{K}.$ To solve OSDP problem, follow the regularizer leader(FTRL), a traditional algorithm in online learning \cite{shalev2012online}, is always involved. For different online learning problem like expert advice, by choosing different regularizer, like entropy, we can obtain satisfying regret bound \cite{cesa2006prediction}. To OSDP, \cite{moridomi2018online} give FTRL with log-determinant regularizer and obtain regret bound as $O(\sqrt{\tau T}).$ This model has been wildly utilised in online collaborative filtering \cite{shamir2011collaborative} \cite{cesa2011efficient} \cite{koltchinskii2011nuclear}, online max-cut problem \cite{lasserre2016max}, and min-max problem \cite{laraki2012semidefinite}.

In this paper we consider a generalization of online semi-definite programming problem with bounded $\bm{\Gamma}$-trace norm, where $\bm{\Gamma}$ is a strictly positive definite matrix.  Hence, the constraint of $\bm{W}$ is presented as $\mathrm{Tr}(\bm{\Gamma W \Gamma}) \leq \tau,$ a bounded $\bm{\Gamma}$-trace norm. We believe that this variance recovers the usual form, if $\bm{\Gamma}$ is identity matrix. In our setting, algorithm from \cite{moridomi2018online} can not be directly applied, since $\mathrm{Tr}(\bm{W})$ can not be bounded by $\mathrm{Tr}(\bm{\Gamma W \Gamma}),$ say $\mathrm{Tr}(\bm{\Gamma W \Gamma})\leq \tau$ implies not $\mathrm{Tr}(\bm{W}) \leq \tau,$ while $\bm{\Gamma}$ is an arbitrary strictly positive definite matrix. Therefore, we generalise the log-determinant regularizer with respect to $\bm{\Gamma}$, and achieve an upper bound to regret as $O(\sqrt{\tau T})$ for our generalised OSDP problem.

We believe that our generalised setting is not castle in the air. In this paper we involve our result to online matrix completion(OMC) with side information \cite{herbster2020online} and online similarity prediction \cite{gentile2013online} with side information. We firstly show that OMC with side information can be reduced to our generalised OSDP problem. Instead of FTRL with log-determinant regularizer in \cite{moridomi2018online}, the bound is only related to the size of comparator matrix, if utilizing our proposed algorithm, we can obtain a even tighter mistake bound. In OMC problem, side information, associated with row and column, implies the ``predictiveness'' of comparator matrix by some inherent characters, and we can represent this side information in our generalised OSDP as matrix $\bm{\Gamma}.$ For an ideal case, if the comparator matrix is latent block structured, the \emph{quasi-dimension}, based on side information, can efficiently reduce the mistake bound. The reduction and algorithm for online similarity prediction is same as OMC with side information.

So in this paper our main contribution is as follows:
\begin{itemize}
\item 1. We extend the FTRL algorithm for the generalised OSDP problem with bounded $\bm{\Gamma}$-trace norm, by introducing a new log-determinant regularizer depending on the matrix $\bm{\Gamma}$ 
     and give a regret bound. Note that our result recovers the previously known bound \cite{moridomi2018online} in the case that $\bm{\Gamma}$ is the identity matrix.

\item 2. Applications of our first technical results contain the OMC \cite{herbster2016mistake,herbster2020online} and the online similarity prediction \cite{gentile2013online,herbster2020online} with side information. For the OMC with side information we firstly reduce the problem to the generalised OSDP where the side information is encoded as the matrix $\bm{\Gamma}.$ Then by running the proposed algorithm the FTRL with our $\bm{\Gamma}$ dependent log-determinant regularizer, we achieve an optimal mistake  bound of OMC, which matches the lower bound when $\bm{\Gamma}$ is the identity matrix(no side information case \cite{herbster2016mistake}), improving the previous result of Herbster et.al \cite{herbster2020online} by a logarithmic factor.
     Furthermore, we show our reduction and an algorithm for the online similarity prediction and obtain improved mistake bound without a logarithmic factor as well.
\end{itemize}

Our paper is composed as follows: In section 3, we give the formal setting of the OSDP and the OMC with side information. The main algorithm and regret bound are given in section 4. In section 5 and 6 we show application of our proposed algorithm to the OMC and online similarity prediction with side information. In appendix we describe technique lemmata, and some details of a case where side information matters.



\section{Related work}

\textbf{OSDP problem} has been explored by  \cite{hazan2012near} \cite{christiano2014online} \cite{moridomi2018online}. \cite{moridomi2018online} give a regret bound by running FTRL with log-determinant regularizer as
$\mathrm{Regret_{OSDP}} \leq O(\sqrt{\tau T}),$
where the decision set is a set of positive definite matrix with bounded trace $\mathrm{Tr}(\bm{W}) \leq \tau.$

\textbf{Online matrix completion} has been studied by \cite{herbster2016mistake} and with side information \cite{herbster2020online}. In recently work \cite{herbster2020online}, authors give a mistake bound as
$O(\frac{\widehat{\mathcal{D}}}{\gamma^{2}} \ln (m+n))$ for the realizable case, where $\widehat{\mathcal{D}}$ is lower bounded by quasi-dimension of the comparator matrix. In ideal case, if this comparator matrix obtains some latent structure, like $(k,l)$-biclustered, then we can set that the side information matrix as PD-Laplacian corresponding to comparator matrix, so $\widehat{\mathcal{D}}$ can achieve $O(k+l),$ which leads a tighter bound than the case that the side information is vacuous as $O(m+n).$

\textbf{Side information} is widely applied in online learning problem with graph-based information\cite{herbster2005online} and online similarity prediction\cite{gentile2013online}.
In these cases the side information matrices are given as the PD-Laplacian of the matrix corresponding the the graph. In this paper we left an additional section in Appendix B for this discussion. Furthermore, our reduction method can applied in Online community membership prediction as well in section 6.

\section{Preliminaries}
For a positive integer $N$, let $[N]$ denote the set $\{1,2,\dots,N\}$
and $\mathbb{S}^{N \times N}_{+}$ and $\mathbb{S}^{N \times N}_{++}$ denote
the set of $N \times N$ semi-positive symmetric matrices and
the set of $N \times N$ strictly positive symmetric positive definite matrices,
respectively. We define $\bm{E}$ as identity matrix.
For a matrix $\bm{X}$, let
$\bm{X}_i$ denote the $i$-th row of $\bm{X}.$ For $\bm{X} \in \mathbb{S}^{N \times N}_{++},$ we denote
$\mathrm{Tr}(\bm{X})=\sum_{i=1}^{N}|\lambda_{i}(\bm{X})|$ as trace norm of $\bm{X},$ further $\mathrm{Tr}(\bm{\Gamma X \Gamma})=\sum_{i=1}^{N}|\lambda_{i}(\bm{\Gamma X \Gamma})|$ as $\bm{\Gamma}$-trace norm for $\forall \bm{\Gamma} \in \mathbb{S}^{N \times N}_{++},$ where $\lambda_{i}(\bm{X})$ is the $i$-th largest eigenvalue of $\bm{X},$ and $\Vert \mathrm{vec}(\bm{X})\Vert_{p}=\left(\sum_{(i,j)}(\bm{X}_{i,j})^{p}\right)^{1/p}.$
We denote that the squared radius of $\bm{M} \in \mathbb{S}^{m \times m}_{+}$ as $\mathcal{R}_{\bm{M}}=\max_{i \in [m]}\bm{M}^{+}_{ii},$
where $\bm{M}^{+}$ is the pseudo inverse of matrix $\bm{M}.$
We define the class of $m \times d$ row-normalized matrices as $\mathcal{N}^{m,d}=\lbrace \bar{\bm{P}} \subset \mathbb{R}^{m \times d}: \Vert \bar{\bm{P}}_{i}\Vert_{2}=1, i \in [m]\rbrace.$

\subsection{Generalised online semi-definite programming with bounded $\bm{\Gamma}$-trace norm}

Our generalised online semi-definite problem$(\mathcal{K},\mathcal{L})$ with respect to bounded $\bm{\Gamma}$-trace norm is defined as follows:
Given a matrix $\bm{\Gamma} \in \mathbb{S}^{N \times N}_{++},$
we define $\mathcal{K}=\lbrace \bm{W} \in \mathbb{S}_{++}^{N \times N}: \mathrm{Tr}(\bm{\Gamma W\Gamma})\leq \tau, \forall i \in [N], |\bm{W}_{i,i}| \leq \beta \rbrace,$ as decision set, and
$\mathcal{L}=\lbrace \bm{L} \in \mathbb{S}_{+}^{N \times N}: \Vert \mathrm{vec}(\bm{L})\Vert_{1} \leq g \rbrace,$ as loss space, more precisely speaking a sparse loss space.
Thus our generalised OSDP problem is as follows: on round $t \in [T],$
\begin{itemize}
\item 1. Algorithm chooses a matrix $\bm{W}_{t} \in \mathcal{K},$
\item 2. Adversary gives a loss matrix $\bm{L}_{t} \in \mathcal{L},$
\item 3. Algorithm incurs the loss as $\bm{W}_{t} \bullet \bm{L}_{t}.$
\end{itemize}

Our goal is to upper bound following regret
\begin{equation}
\mathrm{Regret}_{T}=\sum_{t=1}^{T}\bm{W}_{t} \bullet \bm{L}_{t}-\min_{\bm{W} \in \mathcal{K}}\sum_{t=1}^{T}\bm{W}\bullet \bm{L}_{t}.
\end{equation}

Note that if $\bm{\Gamma}=\bm{E},$ then $\bm{\Gamma W\Gamma}=\bm{W},$ the original OSDP is as a special case of our setting.

\cite{moridomi2018online} introduce an algorithm follow the regularized leader(FTRL) with log-determinant regularizer in matrix form. In OSDP $(\mathcal{K}, \mathcal{L}),$ we give a specific regularizer $R: \mathcal{K}\rightarrow \mathbb{R}$ and choose a matrix $\bm{W}_{t} \in \mathcal{K}$ on each round $t$ according to
\begin{equation}\label{algorithm:FTRL}
\bm{W}_{t}=\argmin_{\bm{W} \in \mathcal{K}}\left(R(\bm{W})+\eta\sum_{s=1}^{t-1}\bm{L}_{s}\bullet \bm{W}\right).
\end{equation}

The log-determinant regularizer is defined as
\begin{equation}
R(\bm{X})=-\ln \det (\bm{X}+\epsilon \bm{E}),
\end{equation}
where $\epsilon$ is positive.

In \cite{moridomi2018online}, the regret bound of original OSDP is restricted in the case that $\bm{\Gamma}=\bm{E}$ and the bound is $O(g\sqrt{\tau \beta T}).$

\subsection{Online matrix completion(OMC) with side information}
Consider a binary matrix $\lbrace -1,+1\rbrace^{m \times n},$
on each round $t,$ adversary sends $(i_{t},j_{t}) \in [m]\times [n]$ to algorithm. Then algorithm predicts $\hat{y}_{t} \in \lbrace -1,+1\rbrace.$ Next adversary reveals $y_{t} \in \lbrace-1,1\rbrace$ to the algorithm, at last algorithm suffers the loss $l_{t} = \mathbb{I}_{y_{t} \neq
\hat{y}_{t}}$, where $\mathbb{I}_{\cdot}=1$ if the event $\cdot$ is true and
$0$, otherwise. The goal of the algorithm is
to minimize the total loss (i.e. the number of mistakes) $M=\sum_{t=1}^T
\mathbb{I}_{y_{t} \neq \hat{y}_{t}}$.

We define a hinge loss function $h_{\gamma}:\mathbb{R} \to \mathbb{R}$, as
\[
	h_{\gamma}(x) = \begin{cases}
		0 & \text{if $\gamma \leq x$,} \\
		1-x/\gamma & \text{otherwise},
	\end{cases}
\]
for $\gamma >0.$
Assume that a sequence $\mathcal{S}=((i_{1},j_{1}),
y_{1}),\cdots, ((i_{T},j_{T}), y_{T})\subseteq ([m]\times[n]\times \lbrace-1,1\rbrace)^{T},$ and
let $\bm{P}$ and $\bm{Q}$ be matrices such that
$\bm{P}\bm{Q}^{T} \in \mathbb{R}^{m \times n},$
then, we define the hinge loss of the sequence $\mathcal{S}$ with respect to
$(\bm{P},\bm{Q})$ and $\gamma$ as
\begin{equation}\label{equation:hloss2}
	\mathrm{hloss}(\mathcal{S},(\bm{P},\bm{Q}),\gamma) =
		\sum_{t=1}^T h_{\gamma}\left(
			\frac{y_t \bm{P}_{i_t} \bm{Q}_{j_t}^{T}}
			{\|\bm{P}_{i_t}\|_{2} \|\bm{Q}_{j_t}\|_{2}}
		\right).
\end{equation}

The max norm of a matrix $\bm{U} \in \mathbb{R}^{m \times n}$ is defined by
\begin{equation}
\Vert \bm{U} \Vert_{\max}=\min_{\bm{PQ}^{T}=\bm{U}} \lbrace \max_{1 \leq i \leq m}\Vert \bm{P}_{i}\Vert \max_{1 \leq j \leq n}\Vert \bm{Q}_{j} \Vert \rbrace,
\end{equation}
where the minimum is over all matrices $\bm{P} \in \mathbb{R}^{m \times d}$ and $\bm{Q}\in \mathbb{R}^{n \times d}$ for all $d.$ So we define
\emph{quasi-dimension} of matrix $\bm{U} \in \mathbb{R}^{m \times n}$ with respect to \emph{side information} $\bm{M} \in \mathbb{S}^{m \times m}_{++}$ and $\bm{N} \in \mathbb{S}^{n \times n}_{++}$ at margin $\gamma$ is defined as
\begin{equation}
\mathcal{D}_{\bm{M},\bm{N}}^{\gamma}(\bm{U})=\min_{\bar{\bm{P}}\bar{\bm{Q}}^{T}=\gamma \bm{U}}\mathcal{R}_{\bm{M}} \mathrm{Tr} \left(\bar{\bm{P}}^{T} \bm{M} \bar{\bm{P}}\right)+\mathcal{R}_{\bm{N}}\mathrm{Tr} \left( \bar{\bm{Q}}^{T}\bm{N}\bar{\bm{Q}}\right),
\end{equation}
where $\bar{\bm{P}} \in \mathcal{N}^{m,d}$ and $\bar{\bm{Q}} \in \mathcal{N}^{n,d}.$ Note that only when $\Vert \bm{U} \Vert_{\max} \leq 1/\gamma,$ the infimum exists. Here we define that $\bm{M},\bm{N}$ as \emph{side information}, especially if $\bm{M}$ and $\bm{N}$ are identity matrix, and $\mathcal{D}^{\gamma}_{\bm{M,N}}=m+n$ when side information is vacuous. In following part we simplifies this quasi-dimension as $\mathcal{D},$ when it leads to no ambiguity. Moreover in following part we denote $\widehat{\mathcal{D}} \geq \mathcal{D}_{\bm{M},\bm{N}}^{\gamma}(\bm{U})$ for a fixed \emph{comparator matrix} $\bm{U}$ to OMC problem with side information $\bm{M}, \bm{N}$ at margin $\gamma.$

Let $G=(V,E,W)$ be an $m$-vertex connected, weighted and undirected graph with positive weights. Let $\bm{A}$ be the $m \times m$ matrix such that $\bm{A}_{ij}=\bm{A}_{ji}=W_{ij}$ if $(i,j)\in E$ and $\bm{A}_{ij}=0,$ otherwise. Let $\bm{D}$ be a $m \times m$ diagonal matrix such that $\bm{D}_{ii}$ is the degree of each vertex $i.$ We define that $\bm{L}=\bm{D}-\bm{A}$ as \emph{Laplacian}. Furthermore positive definite Laplacian(\emph{PD-Laplacian}) is given as
$\bar{\bm{L}}=\bm{L}+\mathcal{R}_{\bm{L}}\frac{1}{m^{2}}\bm{I},$ where $\bm{I} \in \mathbb{R}^{m \times m}$ is a matrix, whose entries are all $1.$

\section{Algorithm for OSDP with bounded $\bm{\Gamma}$-trace norm and regret bound}
We utilise FTRL (\ref{algorithm:FTRL}) with generalised log-determinant regularizer (\ref{Equation:generalised-log-determinant-regularizer}), defined in follow, to our generalised OSDP problem with respect to bounded $\bm{\Gamma}$-trace norm $(\mathcal{K},\mathcal{L}),$ where
\begin{equation*}
\begin{split}
&\mathcal{K}=\left\{\bm{W} \in \mathbb{S}_{++}^{N \times N}: |\bm{W}_{ii}| \leq \beta, \mathrm{Tr}(\bm{\Gamma  W\Gamma}) \leq \tau\right\} \\
&\mathcal{L}=\left\{\bm{L} \in \mathbb{S_{+}}^{N \times N}: \Vert  \mathrm{vec}(\bm{L}) \Vert_{1}=\sum_{i,j}|\bm{L}_{i,j}| \leq g \right\},
\end{split}
\end{equation*}
for a fixed $\bm{\Gamma} \in \mathbb{S}^{N \times N}_{++}.$

We define \emph{generalised log-determinant regularizer} as follows:
\begin{equation}\label{Equation:generalised-log-determinant-regularizer}
R(\bm{X})= -\ln \det (\bm{\Gamma X \Gamma}+\epsilon \bm{E}).
\end{equation}

Next we give our regret bound for FTRL with generalised log-determinant regularizer in following theorem.
\begin{theorem}\label{theorem:upper-bound-of-OSDP}
Given $\bm{\Gamma} \in \mathbb{S}^{N \times N}_{++},$ and $\rho=\max_{i,j}|(\bm{\Gamma}^{-1}\bm{\Gamma}^{-1})_{i,j}|.$
Let
\[
	\mathcal{K} = \{ \bm{W} \in \mathbb{S}^{N \times N}_{++}:
			\Vert \mathrm{vec}(\bm{W}) \Vert_{\infty} \leq \beta,	\mathrm{Tr}(\bm{\Gamma W\Gamma})  \leq \tau
	\}
\]
for some $\beta >0$ and $\tau > 0,$ then let
$
	\mathcal{L} \subseteq \{ \bm{L} \in \mathbb{S}_{+}^{N \times N}:
		\sum_{i,j}|\bm{L}_{i,j}| \leq g
	\}
$
for some $g > 0$.
Then, for any competitor matrix $\bm{W}^{*} \in \mathcal{K}$,
the FTRL algorithm with respect to generalised log-determinant regularizer achieves
\[
	\mathrm{Regret}_\mathrm{OSDP}(T, \mathcal{K}, \mathcal{L}, \bm{W}^{*})
	= O\left(
		g^2(\beta + \rho\epsilon)^2 T\eta+\frac{\tau}{\epsilon \eta}
	\right).
\]
In particular, letting $\eta=\sqrt{\frac{\tau}{g^{2}(\beta+\rho\epsilon)^{2}\epsilon T}}$ we have
\begin{equation}
\mathrm{Regret}_{\mathrm{OSDP}}\leq O\left(\sqrt{g^{2}(\beta+\rho\epsilon)^{2}\tau T/\epsilon}\right).
\end{equation}
\end{theorem}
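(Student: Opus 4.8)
The plan is to run the standard follow-the-regularized-leader analysis, splitting the regret into a \emph{penalty} term governed by the range of the regularizer over $\mathcal{K}$ and a \emph{stability} term governed by the movement $\bm{W}_t-\bm{W}_{t+1}$ of consecutive iterates, and then to use the structure of $R(\bm{X})=-\ln\det(\bm{\Gamma X\Gamma}+\epsilon\bm{E})$ to control each. Writing $F_t(\bm{W})=R(\bm{W})+\eta\sum_{s<t}\bm{L}_s\bullet\bm{W}$ so that $\bm{W}_t=\argmin_{\bm{W}\in\mathcal{K}}F_t(\bm{W})$, the usual telescoping (``be-the-leader'') argument over the convex set $\mathcal{K}$ gives
\[
\eta\,\mathrm{Regret}_T\le\bigl(R(\bm{W}^*)-\min_{\bm{W}\in\mathcal{K}}R(\bm{W})\bigr)+\eta\sum_{t=1}^{T}\bm{L}_t\bullet(\bm{W}_t-\bm{W}_{t+1}).
\]
For the penalty term, every $\bm{W}\in\mathcal{K}$ has $\bm{\Gamma W\Gamma}$ positive semi-definite with eigenvalues $\mu_1,\dots,\mu_N\ge0$ whose sum $\mathrm{Tr}(\bm{\Gamma W\Gamma})$ is at most $\tau$, so $\det(\bm{\Gamma W\Gamma}+\epsilon\bm{E})=\prod_i(\mu_i+\epsilon)$ lies between $\epsilon^N$ (nonnegativity) and $(\tau/N+\epsilon)^N$ (AM--GM); hence $R(\bm{W}^*)-\min_{\bm{W}\in\mathcal{K}}R(\bm{W})\le N\ln(1+\tau/(N\epsilon))\le\tau/\epsilon$, which produces the $\tau/(\epsilon\eta)$ term.

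For the stability term, I would compare the first-order optimality conditions of $\bm{W}_t$ for $F_t$ and of $\bm{W}_{t+1}$ for $F_{t+1}=F_t+\eta\bm{L}_t\bullet(\cdot)$; adding them and noting that the linear parts cancel in $\nabla F_t-\nabla F_{t+1}$ gives $\eta\,\bm{L}_t\bullet(\bm{W}_t-\bm{W}_{t+1})\ge\langle\nabla R(\bm{W}_t)-\nabla R(\bm{W}_{t+1}),\bm{W}_t-\bm{W}_{t+1}\rangle$, and a mean-value expansion of $\nabla R$ at some $\bm{Z}$ on the segment $[\bm{W}_{t+1},\bm{W}_t]\subseteq\mathcal{K}$ rewrites the right-hand side as $\|\bm{W}_t-\bm{W}_{t+1}\|_{\bm{Z}}^2$. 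Combining with Cauchy--Schwarz in the (positive definite) local metric $\nabla^2R(\bm{Z})$ yields
\[
\|\bm{W}_t-\bm{W}_{t+1}\|_{\bm{Z}}\le\eta\,\|\bm{L}_t\|_{\bm{Z},*},\qquad\text{so}\qquad\bm{L}_t\bullet(\bm{W}_t-\bm{W}_{t+1})\le\eta\,\|\bm{L}_t\|_{\bm{Z},*}^2,
\]
where $\|\cdot\|_{\bm{Z}}$ and $\|\cdot\|_{\bm{Z},*}$ are the norm and dual norm induced by $\nabla^2R(\bm{Z})$; this step is standard (cf.\ \cite{moridomi2018online} and the technical lemmas in the appendix), so the real work is to evaluate $\|\bm{L}_t\|_{\bm{Z},*}$.

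This last point is the crux. Since $R=(-\ln\det)\circ\phi$ for the affine map $\phi(\bm{X})=\bm{\Gamma X\Gamma}+\epsilon\bm{E}$, the chain rule gives $\langle\bm{H},\nabla^2R(\bm{Z})[\bm{H}]\rangle=\|\bm{A}^{-1/2}\bm{\Gamma H\Gamma}\bm{A}^{-1/2}\|_F^2$ with $\bm{A}=\bm{\Gamma Z\Gamma}+\epsilon\bm{E}$; inverting this quadratic form by conjugating with $\bm{\Gamma}^{-1}$ shows $(\nabla^2R(\bm{Z}))^{-1}[\bm{G}]=\bm{B}\bm{G}\bm{B}$ with $\bm{B}=\bm{\Gamma}^{-1}\bm{A}\bm{\Gamma}^{-1}=\bm{Z}+\epsilon\bm{\Gamma}^{-1}\bm{\Gamma}^{-1}$, so $\|\bm{L}\|_{\bm{Z},*}^2=\mathrm{Tr}(\bm{B}\bm{L}\bm{B}\bm{L})$. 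Since $\bm{L}\in\mathbb{S}^{N\times N}_+$, the matrix $\bm{B}^{1/2}\bm{L}\bm{B}^{1/2}$ is positive semi-definite, so $\mathrm{Tr}(\bm{B}\bm{L}\bm{B}\bm{L})=\|\bm{B}^{1/2}\bm{L}\bm{B}^{1/2}\|_F^2\le\bigl(\mathrm{Tr}(\bm{B}^{1/2}\bm{L}\bm{B}^{1/2})\bigr)^2=(\bm{B}\bullet\bm{L})^2\le\bigl(\|\mathrm{vec}(\bm{B})\|_\infty\|\mathrm{vec}(\bm{L})\|_1\bigr)^2$; because $\bm{Z}\in\mathcal{K}$ has $\|\mathrm{vec}(\bm{Z})\|_\infty\le\beta$ and every entry of $\bm{\Gamma}^{-1}\bm{\Gamma}^{-1}$ has absolute value at most $\rho$, we get $\|\mathrm{vec}(\bm{B})\|_\infty\le\beta+\rho\epsilon$, and with $\|\mathrm{vec}(\bm{L}_t)\|_1\le g$ this gives $\|\bm{L}_t\|_{\bm{Z},*}^2\le g^2(\beta+\rho\epsilon)^2$.

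Putting the two terms together, $\mathrm{Regret}_T\le\tau/(\epsilon\eta)+\eta\,g^2(\beta+\rho\epsilon)^2T$, and the stated choice $\eta=\sqrt{\tau/(g^2(\beta+\rho\epsilon)^2\epsilon T)}$ balances them to $O(\sqrt{g^2(\beta+\rho\epsilon)^2\tau T/\epsilon})$; specialising to $\bm{\Gamma}=\bm{E}$, $\epsilon=\beta$ (so $\rho=1$) recovers the $O(g\sqrt{\beta\tau T})$ bound of \cite{moridomi2018online}. I expect the main obstacle to be the crux paragraph: getting the Hessian of the $\bm{\Gamma}$-twisted log-determinant and, in particular, its inverse into the clean form $\mathrm{Tr}\bigl((\bm{Z}+\epsilon\bm{\Gamma}^{-1}\bm{\Gamma}^{-1})\bm{L}(\bm{Z}+\epsilon\bm{\Gamma}^{-1}\bm{\Gamma}^{-1})\bm{L}\bigr)$, since this is where the extra $\rho\epsilon$ contribution (invisible when $\bm{\Gamma}=\bm{E}$) enters and where one must track that the evaluation point remains in the entrywise-bounded set $\mathcal{K}$.
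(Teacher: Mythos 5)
Your proposal is correct, but it reaches the bound by a genuinely different route from the paper. The paper does not touch the Hessian at all: it invokes the generic FTRL bound $\mathrm{Regret}\le H_0/\eta+\eta T/s$ for a regularizer that is $s$-strongly convex \emph{with respect to the loss class} (Definition~\ref{definition:s-strongly-convex-over-KL} and Lemma~\ref{lemma:lemma-for-upper-bound-of-strongly-convex}), and establishes that strong-convexity property by a probabilistic argument: it views $\bm{\Gamma X\Gamma}$ and $\bm{\Gamma Y\Gamma}$ as covariances of zero-mean Gaussians, lower-bounds their total-variation distance via characteristic functions, and converts this into a concavity defect of $\ln\det$ through the entropy of a Gaussian mixture (Lemmas~\ref{lemma:A-1}--\ref{lemma:A-4} and \ref{lemma:primary-form-for-entropy}), yielding $s=1/(1152\sqrt{e}(\beta+\rho\epsilon)^2g^2)$. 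You instead run the local-norm (penalty plus stability) analysis and compute the inverse Hessian of the $\bm{\Gamma}$-twisted log-determinant directly, arriving at $\|\bm{L}\|_{\bm{Z},*}^2=\mathrm{Tr}(\bm{B}\bm{L}\bm{B}\bm{L})$ with $\bm{B}=\bm{Z}+\epsilon\bm{\Gamma}^{-1}\bm{\Gamma}^{-1}$ and bounding it by $(\bm{B}\bullet\bm{L})^2\le g^2(\beta+\rho\epsilon)^2$; your algebra here checks out, and the shift $\beta\mapsto\beta+\rho\epsilon$ enters in exactly the same way as in the paper's Proposition, where Lemma~\ref{lemma:lemma-for-strongly-convex} is applied to $\bm{X}+\epsilon\bm{\Gamma}^{-1}\bm{E}\bm{\Gamma}^{-1}$ with $\beta'=\beta+\epsilon\rho$. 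The $H_0\le\tau/\epsilon$ computation is identical in both arguments. What each buys: your direct Hessian route is self-contained, avoids the Gaussian/entropy machinery entirely, and gives a cleaner constant (essentially $1$ in place of $1152\sqrt{e}$ in the stability term); the paper's route buys reuse of the existing lemma infrastructure of Moridomi et al., needing only the observation that conjugation by $\bm{\Gamma}$ commutes with the covariance/entropy argument. Either way the final tuning of $\eta$ and the recovery of the $\bm{\Gamma}=\bm{E}$ case are the same.
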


Note that in original OSDP, $\bm{\Gamma}=\bm{E},$ so the result $O(g\sqrt{\tau \beta T})$ in \cite{moridomi2018online} is a special case of our problem, by setting $\epsilon=\beta,$ and $\rho=1.$

Before we prove this theorem, we need to involve some Lemmata and notations.

The negative entropy function over the set of probability distribution $P$ over $\mathbb{R}^{N}$ is defined as $H(P)=\mathbb{E}_{x \sim P}[\ln (P(x))].$ The total variation distance between probability distribution $P$ and $Q$ over $\mathbb{R}^{N}$ is defined as $\frac{1}{2}\int_{x}|P(x)-Q(x)|dx.$ The characteristic function of a probability distribution $P$ over $\mathbb{R}^{N}$ is defined as $\phi(u)=\mathbb{E}_{x \sim P}[e^{i u^{T}x}]$ where $i$ is the imaginary unit.

\begin{definition}\label{definition:s-strongly-convex-over-KL}
For a decision space $\mathcal{K}$ and a real number $s \geq 0,$ a regularizer $R: \mathcal{K} \rightarrow \mathbb{R}$ is said to be $s$-strongly convex with respect to a loss space $\mathcal{L}$ if for any $\alpha \in [0,1]$ any $\bm{X},\bm{Y} \in \mathcal{K}$ and $\bm{L} \in \mathcal{L}:$
\begin{equation}
\begin{split}
R(\alpha \bm{X} +(1-\alpha)\bm{Y}) &\leq \alpha R(\bm{X})+(1-\alpha)R(\bm{Y})\\
&-\frac{s}{2}\alpha (1-\alpha)|\bm{L} \bullet (\bm{X}-\bm{Y})|^{2}.
\end{split}
\end{equation}
\end{definition}

\begin{lemma}
Let $G_{1}$ and $G_{2}$ are two zero mean Gaussian distribution with covariance matrix $\bm{\Gamma\Sigma \Gamma}$ and $\bm{\Gamma\Theta\Gamma}.$ Furthermore $\bm{\Sigma}$ and $\bm{\Theta}$ are positive definite matrices. If there exists $(i,j)$ such that
\begin{equation}
|\bm{\Sigma}_{i,j}-\bm{\Theta}_{i,j}|\geq \delta(\bm{\Sigma}_{i,i}+\bm{\Theta}_{i,i}+\bm{\Sigma}_{j,j}+\bm{\Theta}_{j,j}),
\end{equation}
then the total variation distance between $G_{1}$ and $G_{2}$ is at least $\frac{1}{12e^{1/4}}\delta.$
\end{lemma}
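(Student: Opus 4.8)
The plan is to strip off the nuisance matrix $\bm{\Gamma}$, reduce to a one–dimensional problem by projecting onto a well–chosen direction, and finish with an explicit estimate for the total variation distance between two centred univariate Gaussians. Throughout write $d_{\mathrm{TV}}$ for the total variation distance, and $N(\bm{0},\bm{C})$ for the centred Gaussian with covariance $\bm{C}$. First I would use two soft facts. (i) $d_{\mathrm{TV}}$ is invariant under any invertible linear change of variables; applying $x\mapsto\bm{\Gamma}^{-1}x$ (legitimate since $\bm{\Gamma}\in\mathbb{S}^{N\times N}_{++}$) sends $G_{1},G_{2}$ to $N(\bm{0},\bm{\Sigma})$ and $N(\bm{0},\bm{\Theta})$, so it suffices to lower bound $d_{\mathrm{TV}}\bigl(N(\bm{0},\bm{\Sigma}),N(\bm{0},\bm{\Theta})\bigr)$. (ii) $d_{\mathrm{TV}}$ does not increase under pushforward by a measurable map; in particular, for any fixed vector $\bm{w}$ it is at least the distance between the laws of $\bm{w}^{\top}X$, i.e.\ between $N(0,\bm{w}^{\top}\bm{\Sigma}\bm{w})$ and $N(0,\bm{w}^{\top}\bm{\Theta}\bm{w})$.

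Next I would choose the direction. Let $(i,j)$ witness the hypothesis and assume $i\neq j$ (for $i=j$ one projects onto $\bm{e}_{i}$ and the argument is strictly easier). Put $\bm{u}=\bm{e}_{i}+\bm{e}_{j}$ and $\bm{u}'=\bm{e}_{i}-\bm{e}_{j}$; for symmetric $\bm{C}$ one has $\bm{u}^{\top}\bm{C}\bm{u}=\bm{C}_{ii}+\bm{C}_{jj}+2\bm{C}_{ij}$ and $\bm{u}'^{\top}\bm{C}\bm{u}'=\bm{C}_{ii}+\bm{C}_{jj}-2\bm{C}_{ij}$, so $\bm{u}^{\top}(\bm{\Sigma}-\bm{\Theta})\bm{u}$ and $\bm{u}'^{\top}(\bm{\Sigma}-\bm{\Theta})\bm{u}'$ differ by exactly $4(\bm{\Sigma}_{ij}-\bm{\Theta}_{ij})$. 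Hence for some $\bm{w}\in\{\bm{u},\bm{u}'\}$ we have $|\bm{w}^{\top}\bm{\Sigma}\bm{w}-\bm{w}^{\top}\bm{\Theta}\bm{w}|\ge 2|\bm{\Sigma}_{ij}-\bm{\Theta}_{ij}|$. Write $\{\bm{w}^{\top}\bm{\Sigma}\bm{w},\bm{w}^{\top}\bm{\Theta}\bm{w}\}=\{\sigma^{2},\tilde\sigma^{2}\}$ with $0<\sigma^{2}\le\tilde\sigma^{2}$ (positivity from positive definiteness of $\bm{\Sigma},\bm{\Theta}$). Positive semidefiniteness of the $2\times2$ principal submatrices gives $|\bm{\Sigma}_{ij}|\le\tfrac12(\bm{\Sigma}_{ii}+\bm{\Sigma}_{jj})$ and likewise for $\bm{\Theta}$, so $\sigma^{2}\le 2(\bm{\Sigma}_{ii}+\bm{\Theta}_{ii}+\bm{\Sigma}_{jj}+\bm{\Theta}_{jj})$; together with the hypothesis the relative gap $s:=(\tilde\sigma^{2}-\sigma^{2})/\sigma^{2}$ satisfies $s\ge\delta$ (in particular $\delta\le\tfrac12$). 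After rescaling by $1/\sigma$ it remains to prove $d_{\mathrm{TV}}\bigl(N(0,1),N(0,1+s)\bigr)\ge\delta/(12e^{1/4})$.

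For the one–dimensional estimate, a direct computation shows the likelihood ratio of $N(0,1)$ to $N(0,1+s)$ exceeds $1$ exactly on a symmetric interval $(-r,r)$ with $r^{2}=(1+s)\ln(1+s)/s$, so $d_{\mathrm{TV}}\bigl(N(0,1),N(0,1+s)\bigr)=2\bigl(\Phi(r)-\Phi(r/\sqrt{1+s})\bigr)$, and one checks $r/\sqrt{1+s}<1<r$. This quantity is nondecreasing in $s$ (since $v\mapsto v\ln v/(v-1)$ is increasing and $v\mapsto\ln v/(v-1)$ is decreasing on $(1,\infty)$, so the upper limit $r$ increases and the lower limit $r/\sqrt{1+s}$ decreases), hence it suffices to treat $s=\delta\in(0,\tfrac12]$. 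On this range $r\le\sqrt{3/2}<\sqrt2$, so bounding the standard Gaussian density below by $\phi(\sqrt2)=e^{-1}/\sqrt{2\pi}$ on $[r/\sqrt{1+\delta},r]\subseteq[0,\sqrt2]$ gives
\[
d_{\mathrm{TV}}\bigl(N(0,1),N(0,1+\delta)\bigr)\;\ge\;2\phi(\sqrt2)\Bigl(r-\tfrac{r}{\sqrt{1+\delta}}\Bigr)\;=\;\frac{2}{e\sqrt{2\pi}}\,\sqrt{\tfrac{\ln(1+\delta)}{\delta}}\;\frac{\delta}{\sqrt{1+\delta}+1},
\]
and the crude bounds $\ln(1+\delta)/\delta\ge 2\ln\tfrac32$ and $\sqrt{1+\delta}+1\le 1+\sqrt{3/2}$ reduce the coefficient of $\delta$ to an explicit constant which a short numerical check confirms exceeds $1/(12e^{1/4})$.

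\textbf{Where the work is.} Everything up through the one–dimensional reduction is soft (linear invariance, the data–processing inequality, positive semidefiniteness of $2\times2$ minors). The delicate step is the last one: identifying the likelihood–ratio region exactly, establishing monotonicity of $d_{\mathrm{TV}}(N(0,1),N(0,1+s))$ in $s$ (or, alternatively, proving the bound directly with $\min(s,\tfrac12)$ in place of $s$ to bypass monotonicity), and extracting a presentable constant from the elementary but slightly fussy inequalities for $\ln(1+\delta)/\delta$ and $\sqrt{1+\delta}$ on $(0,\tfrac12]$.
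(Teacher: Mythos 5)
Your proof is correct, and it reaches the stated constant by a genuinely different route from the paper. The paper never touches the total variation distance directly: it invokes the bound $\int|G_1-G_2|\,dx\ge\max_u|\phi_1(u)-\phi_2(u)|$ on characteristic functions (Lemma~\ref{lemma:A-1}), substitutes $\bar u=\bm{\Gamma}u$ to remove $\bm{\Gamma}$, applies the elementary inequality $e^{-x/2}-e^{-(1-x)/2}\ge\frac{e^{-1/4}}{2}(1-2x)$ (Lemma~\ref{lemma:A-4}) to get the ratio $\frac{1}{2e^{1/4}}\,|\alpha_1-\alpha_2|/(\alpha_1+\alpha_2)$, and then tests the three vectors $\bm{e}_i+\bm{e}_j,\bm{e}_i,\bm{e}_j$, one of which must capture a $\tfrac{2\delta}{3}$ fraction of the off-diagonal gap by the polarization identity; the factor $3$ from this three-way split is where the final $\tfrac{1}{12e^{1/4}}$ comes from. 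You instead strip $\bm{\Gamma}$ by invariance of $d_{\mathrm{TV}}$ under invertible linear maps, project onto a single direction via the data-processing inequality, use the cleaner two-vector polarization with $\bm{e}_i\pm\bm{e}_j$ (which loses only a factor $2$ rather than $3$), and then compute the one-dimensional Gaussian total variation exactly via the likelihood-ratio region $(-r,r)$ with $r^2=(1+s)\ln(1+s)/s$. I checked the details you flagged as delicate: the monotonicity argument ($v\ln v/(v-1)$ increasing, $\ln v/(v-1)$ decreasing) is sound, the hypothesis does force $\delta\le\tfrac12$ via $|\bm{\Sigma}_{ij}-\bm{\Theta}_{ij}|\le\tfrac12(\bm{\Sigma}_{ii}+\bm{\Theta}_{ii}+\bm{\Sigma}_{jj}+\bm{\Theta}_{jj})$, and your final coefficient evaluates to roughly $0.118$, comfortably above $1/(12e^{1/4})\approx0.065$. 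What the paper's approach buys is modularity (it reuses four black-box lemmata from \cite{moridomi2018online} with almost no calculus); what yours buys is self-containedness, a sharper intermediate reduction, and in fact a strictly better constant, at the cost of the fussy but elementary one-dimensional estimate at the end.
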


\begin{proof}
Given $\phi_{1}(u)$ and $\phi_{2}(u)$ as characteristic function of $G_{1}$ and $G_{2}$ respectively. Due to Lemma \ref{lemma:A-1} in \cite{moridomi2018online}, we have
\begin{equation}
\int_{x} |G_{1}(x)-G_{2}(x)| dx \geq \max_{u \in \mathbb{R}^{N}}|\phi_{1}(u)-\phi_{2}(u)|,
\end{equation}
So we only need to show the lower bound of $\max_{u \in \mathbb{R}^{N}}|\phi_{1}(u)-\phi_{2}(u)|.$

Then we set that characteristic function of $G_{1}$ and $G_{2}$ are $\phi_{1}(u)=e^{\frac{-1}{2}u^{T}\bm{\Gamma}^{T}\bm{\Sigma \Gamma} u}$ and $\phi_{2}(u)=e^{\frac{-1}{2}u^{T}\bm{\Gamma}^{T}\bm{\Theta} \bm{\Gamma} u}$ respectively. Setting that
$\alpha_{1}=(\bm{\Gamma} v)^{T}\bm{\Sigma} (\bm{\Gamma} v),$ $\alpha_{2}=(\bm{\Gamma} v)^{T}\bm{\Theta} (\bm{\Gamma} v)$ and $\bm{\Gamma} u=\frac{\bm{\Gamma} v}{\sqrt{\alpha_{1}+\alpha_{2}}}.$ Moreover we denote that $\bar{v}=\Gamma v,$ for any $\bar{v} \in \mathbb{R}^{V},$ there exists $v \in \mathbb{R}^{V}.$ $\bar{u}=\Gamma u$ in the same way.

We need only give the lower bound of $\max_{u \in \mathbb{R}^{N}} |\phi_{1}(u)-\phi_{2}(u)|.$

Next we have that
\begin{equation}
\begin{split}
&\max_{u \in \mathbb{R}^{N}} |\phi_{1}(u)-\phi_{2}(u)|\\
&=\max_{u \in \mathbb{R}^{N}}\left|e^{\frac{-1}{2} u^{T}\bm{\Gamma \Sigma \Gamma} u}-e^{\frac{-1}{2}u^{T}\bm{\Gamma\Theta\Gamma} u}\right| \\
&=\max_{u \in \mathbb{R}^{V}}\left|e^{\frac{-1}{2}(\bm{\Gamma} u)^{T}\bm{\Sigma} (\bm{\Gamma} u)}-e^{\frac{-1}{2}(\bm{\Gamma} u)^{T}\bm{\Theta}(\bm{\Gamma} u)}\right| \\
&\geq \max_{\bar{v} \in \mathbb{R}^{N}} \left| e^{\frac{-\alpha_{1}}{2(\alpha_{1}+\alpha_{2})}}-e^{\frac{-\alpha_{2}}{2(\alpha_{1}+\alpha_{2})}}\right| \\
& \geq \max_{\bar{v} \in \mathbb{R}^{N}} \left|\frac{1}{2e^{1/4}} \frac{\alpha_{1}-\alpha_{2}}{\alpha_{1}+\alpha_{2}}\right|.
\end{split}
\end{equation}
Then second inequality is due to Lemma \ref{lemma:A-4} in \cite{moridomi2018online}.

Due to assumption in the Lemma we obtain for some $(i,j)$ that
\begin{equation}
\begin{split}
&\delta(\bm{\Sigma}_{i,i}+\bm{\Theta}_{i,i}+\Sigma_{j,j}+\bm{\Theta}_{j,j}) \leq |\bm{\Sigma}_{i,j}-\bm{\Theta}_{i,j}| \\
&=\frac{1}{2}|(\bm{e}_{i}+\bm{e}_{j})^{T}(\bm{\Sigma-\Theta})(\bm{e}_{i}+\bm{e}_{j})\\
&-\bm{e}_{i}^{T}(\bm{\Sigma}-\bm{\Theta})\bm{e}_{i}-\bm{e}_{j}^{T}(\bm{\Sigma}-\bm{\Theta})\bm{e}_{j}|
\end{split}
\end{equation}
It implies that one of $(\bm{e}_{i}+\bm{e}_{j})^{T}(\bm{\Sigma}-\bm{\Theta})(\bm{e}_{i}+\bm{e}_{j}),$ $\bm{e}_{i}^{T}(\bm{\Sigma}-\bm{\Theta})\bm{e}_{i}$ and $\bm{e}_{j}^{T}(\bm{\Sigma}-\bm{\Theta})\bm{e}_{j}$ has absolute value greater that $\frac{2\delta}{3}(\bm{\Sigma}_{i,i}+\bm{\Theta}_{i,i}+\bm{\Sigma}_{j,j}+\bm{\Theta}_{j,j}).$

Since $\Sigma, \Theta$ are strictly positive definite matrices, we have that for all $v \in \lbrace \bm{e}_{i}+\bm{e}_{j},\bm{e}_{i},\bm{e}_{j}\rbrace$
\begin{equation}
v^{T}(\bm{\Sigma} +\bm{\Theta})v \leq 2(\bm{\Sigma}+\bm{\Theta})_{i,i}+(\bm{\Sigma}+\bm{\Theta})_{j,j}.
\end{equation}

and therefore we have that
\begin{equation}
\begin{split}
&\max_{\bar{v} \in \mathbb{R}^{N}} \left|\frac{1}{2e^{1/4}} \frac{\alpha_{1}-\alpha_{2}}{\alpha_{1}+\alpha_{2}}\right| \\
& \geq \max_{\bar{v} \in \lbrace \bm{e}_{i}+\bm{e}_{j},\bm{e}_{i},\bm{e}_{j}\rbrace}\left|\frac{1}{2e^{1/4}}\frac{v^{T}(\bm{\Sigma}-\bm{\Theta})v}{v^{T}(\bm{\Sigma}+\bm{\Theta})v}\right| \geq \frac{\delta}{6e^{1/4}}
\end{split}
\end{equation}
\qed
\end{proof}

\begin{lemma}{\label{lemma:primary-form-for-entropy}}
Let $X, Y \in \mathbb{S}_{+}^{N \times N}$ be such that
\begin{equation}
|\bm{X}_{i,j}-\bm{Y}_{i,j}|\geq \delta(\bm{X}_{i,i}+\bm{Y}_{i,i}+\bm{X}_{j,j}+\bm{Y}_{j,j}),
\end{equation}
and $\bm{\Gamma}$ is symmetric strictly positive definite matrix.
Then the following inequality holds that
\begin{equation}
\begin{split}
&-\ln \det(\alpha \bm{\Gamma X\Gamma} + (1-\alpha )\bm{\Gamma Y\Gamma}) \\
&\leq -\alpha\ln \det(\bm{\Gamma X\Gamma} )-(1-\alpha)\ln \det(\bm{\Gamma Y\Gamma} ) \\
&-\frac{\alpha(1-\alpha)}{2}\frac{\delta^{2}}{72e^{1/2}}.
\end{split}
\end{equation}
\end{lemma}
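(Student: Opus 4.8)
The plan is to deduce this quantitative strict convexity of the generalised log-determinant from the strong concavity of the differential entropy, via the Gaussian coupling that also underlies the preceding lemma. We may assume $\bm X,\bm Y\in\mathbb S_{++}^{N\times N}$: if either is singular then so is $\bm\Gamma X\bm\Gamma$ (respectively $\bm\Gamma Y\bm\Gamma$), the corresponding term on the right-hand side is $+\infty$, and there is nothing to prove. Since $\bm\Gamma$ is strictly positive definite, $P:=\mathcal N(\bm 0,\bm\Gamma X\bm\Gamma)$ and $Q:=\mathcal N(\bm 0,\bm\Gamma Y\bm\Gamma)$ are then non-degenerate centred Gaussians with smooth everywhere-positive densities $p,q$; write $M:=\alpha P+(1-\alpha)Q$ for the mixture and $m=\alpha p+(1-\alpha)q$. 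The whole argument rests on the dictionary $H(\mathcal N(\bm 0,\bm A))=\tfrac12\ln\det(2\pi e\,\bm A)$, equivalently $-\ln\det\bm A=-2H(\mathcal N(\bm 0,\bm A))+N\ln(2\pi e)$.

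Step one is the maximum-entropy property of the Gaussian: $M$ is centred with covariance $\alpha\bm\Gamma X\bm\Gamma+(1-\alpha)\bm\Gamma Y\bm\Gamma$, so $H(M)\le H(\mathcal N(\bm 0,\alpha\bm\Gamma X\bm\Gamma+(1-\alpha)\bm\Gamma Y\bm\Gamma))$, i.e. in log-det form $-\ln\det(\alpha\bm\Gamma X\bm\Gamma+(1-\alpha)\bm\Gamma Y\bm\Gamma)\le -2H(M)+N\ln(2\pi e)$. Step two is a quantitative concavity bound for $H$: from the identity $H(M)-\alpha H(P)-(1-\alpha)H(Q)=\alpha D(P\Vert M)+(1-\alpha)D(Q\Vert M)$, together with $\Vert p-m\Vert_1=(1-\alpha)\Vert p-q\Vert_1$, $\Vert q-m\Vert_1=\alpha\Vert p-q\Vert_1$ and Pinsker's inequality $D(\cdot\Vert\cdot)\ge\tfrac12\Vert\cdot\Vert_1^2$, one obtains $H(M)\ge\alpha H(P)+(1-\alpha)H(Q)+\tfrac12\alpha(1-\alpha)\Vert p-q\Vert_1^2$ --- that is, the negative differential entropy is $1$-strongly convex with respect to $\Vert\cdot\Vert_1=\int_{\mathbb R^N}|\cdot|\,dx$.

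Feeding step two into step one and translating the three entropies back through the dictionary (the $N\ln(2\pi e)$ terms cancel), we get
\[
-\ln\det\!\big(\alpha\bm\Gamma X\bm\Gamma+(1-\alpha)\bm\Gamma Y\bm\Gamma\big)\ \le\ -\alpha\ln\det(\bm\Gamma X\bm\Gamma)-(1-\alpha)\ln\det(\bm\Gamma Y\bm\Gamma)-\alpha(1-\alpha)\Vert p-q\Vert_1^2 .
\]
It remains only to lower-bound $\Vert p-q\Vert_1$, which is precisely the preceding lemma applied with $\bm\Sigma=\bm X$ and $\bm\Theta=\bm Y$: the hypothesis $|\bm X_{i,j}-\bm Y_{i,j}|\ge\delta(\bm X_{i,i}+\bm Y_{i,i}+\bm X_{j,j}+\bm Y_{j,j})$ is exactly what it requires, so the total variation distance between $P$ and $Q$ is at least $\delta/(12e^{1/4})$, hence $\Vert p-q\Vert_1\ge\delta/(6e^{1/4})$ and $\Vert p-q\Vert_1^2\ge\delta^2/(36e^{1/2})\ge\tfrac12\cdot\delta^2/(72e^{1/2})$. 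Substituting gives the stated inequality (with slack --- the constant $72$ is not optimal under this argument).

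The step that deserves the most care is the continuous strong-concavity argument in step two: one must check that $H(P),H(Q),H(M)$ and $D(P\Vert M),D(Q\Vert M)$ are finite and that the divergence decomposition and Pinsker's inequality are legitimate. For Gaussians and their finite mixtures this is automatic, since all the measures in play are mutually absolutely continuous with smooth, rapidly decaying densities; so in fact no genuine obstacle arises and the proof is essentially the bookkeeping above.
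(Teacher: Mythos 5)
Your proposal is correct and follows essentially the same route as the paper: couple $\bm{X},\bm{Y}$ to centred Gaussians with covariances $\bm{\Gamma X\Gamma},\bm{\Gamma Y\Gamma}$, combine the maximum-entropy property of the Gaussian with the strong concavity of differential entropy in total variation, and invoke the preceding lemma for the lower bound $\mathrm{TV}\geq\delta/(12e^{1/4})$. The only (harmless) deviation is that you rederive the entropy-concavity step from the KL-mixture identity plus Pinsker rather than citing the paper's Lemma A.2, which in fact yields a slightly better constant than the stated $\delta^{2}/(144\sqrt{e})$.
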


\begin{proof}
Let $G_{1}$ and $G_{2}$ are zero mean Gaussian distribution with covariance matrix $\bm{\Gamma \Sigma \Gamma} =\bm{\Gamma X\Gamma} $ and $\bm{\Gamma \Theta\Gamma} =\bm{\Gamma Y\Gamma} .$ In matrix total variation distance between $G_{1}$ and $G_{2}$ is at least $\frac{\delta}{12e^{1/4}}.$
We denote that $\tilde{\delta}=\frac{\delta}{12e^{1/4}}.$ Consider the entropy of the following probability distribution of $v$ with probability $\alpha$ that $v \sim G_{1}$ and $v \sim G_{2}$ otherwise. Its covariance matrix is $\alpha \bm{\Gamma \Sigma\Gamma} +(1-\alpha)\bm{\Gamma \Theta\Gamma} .$
Due to Lemma \ref{lemma:A-2} and \ref{lemma:A-3} in \cite{moridomi2018online} we obtain that
\begin{equation*}
\begin{split}
&\ln \det(\alpha \bm{\Gamma\Sigma \Gamma}+(1-\alpha)\bm{\Gamma \Theta\Gamma}) \\
&\leq 2H(\alpha G_{1}+(1-\alpha)G_{2})+ \ln (2\pi e)^{V} \\
& \leq 2\alpha H(G_{1})+2(1-\alpha)H(G_{2})+ \ln (2\pi e)^{V}-\alpha (1-\alpha)\tilde{\delta}^{2} \\
&=\alpha \ln \det(\bm{\Gamma \Sigma\Gamma})-(1-\alpha)\ln \det(\bm{\Gamma\Theta\Gamma})-\alpha (1-\alpha)\tilde{\delta}^{2}.
\end{split}
\end{equation*}
\qed
\end{proof}

\begin{proposition}
The generalised log-determinant regularizer $R(X)=-\ln \det(\bm{\Gamma X\Gamma} +\epsilon \bm{E})$ is $s$-strongly convex with respect to $\mathcal{L}$ for $\mathcal{K}$ with $s=1/(1152\sqrt{e}(\beta+\rho\epsilon)^{2}g^{2}).$ Here $\bm{E}$ is identity matrix.
\end{proposition}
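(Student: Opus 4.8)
The plan is to reduce everything to Lemma~\ref{lemma:primary-form-for-entropy} after absorbing the additive $\epsilon\bm{E}$ into a shifted pair of matrices. First I would observe that $\bm{\Gamma X\Gamma}+\epsilon\bm{E} = \bm{\Gamma}\bigl(\bm{X}+\epsilon\bm{\Gamma}^{-1}\bm{\Gamma}^{-1}\bigr)\bm{\Gamma}$, so that, setting $\bm{X}' := \bm{X}+\epsilon\bm{\Gamma}^{-1}\bm{\Gamma}^{-1}$ and $\bm{Y}' := \bm{Y}+\epsilon\bm{\Gamma}^{-1}\bm{\Gamma}^{-1}$, we have $R(\bm{X}) = -\ln\det(\bm{\Gamma X'\Gamma})$, $R(\bm{Y}) = -\ln\det(\bm{\Gamma Y'\Gamma})$, and, since $\alpha\bm{X}'+(1-\alpha)\bm{Y}' = \alpha\bm{X}+(1-\alpha)\bm{Y}+\epsilon\bm{\Gamma}^{-1}\bm{\Gamma}^{-1}$, also $R(\alpha\bm{X}+(1-\alpha)\bm{Y}) = -\ln\det\bigl(\alpha\bm{\Gamma X'\Gamma}+(1-\alpha)\bm{\Gamma Y'\Gamma}\bigr)$. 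Both $\bm{X}'$ and $\bm{Y}'$ are strictly positive definite (a PD matrix plus a PD matrix), hence lie in $\mathbb{S}^{N\times N}_{+}$, so Lemma~\ref{lemma:primary-form-for-entropy} is applicable to the pair $(\bm{X}',\bm{Y}')$.

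Next I would pick the index pair $(i,j)$ attaining $\|\mathrm{vec}(\bm{X}-\bm{Y})\|_{\infty} = |\bm{X}_{ij}-\bm{Y}_{ij}|$ (if $\bm{X}=\bm{Y}$ the target inequality is just convexity of $-\ln\det$ and is trivial). The shift does not change off-diagonal differences, $\bm{X}'_{ij}-\bm{Y}'_{ij} = \bm{X}_{ij}-\bm{Y}_{ij}$, whereas each diagonal entry grows only mildly: $\bm{X}'_{ii} = \bm{X}_{ii}+\epsilon(\bm{\Gamma}^{-1}\bm{\Gamma}^{-1})_{ii} \leq \beta + \rho\epsilon$, because $|\bm{X}_{ii}|\leq\beta$ for $\bm{X}\in\mathcal{K}$ and $(\bm{\Gamma}^{-1}\bm{\Gamma}^{-1})_{ii}\leq\rho$ by definition of $\rho$. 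Summing the four relevant diagonal entries gives $\bm{X}'_{ii}+\bm{Y}'_{ii}+\bm{X}'_{jj}+\bm{Y}'_{jj}\leq 4(\beta+\rho\epsilon)$, so the hypothesis of Lemma~\ref{lemma:primary-form-for-entropy} holds with $\delta = \|\mathrm{vec}(\bm{X}-\bm{Y})\|_{\infty}/\bigl(4(\beta+\rho\epsilon)\bigr)$, and the lemma yields
\[
R(\alpha\bm{X}+(1-\alpha)\bm{Y}) \leq \alpha R(\bm{X})+(1-\alpha)R(\bm{Y}) - \frac{\alpha(1-\alpha)}{2}\cdot\frac{\|\mathrm{vec}(\bm{X}-\bm{Y})\|_{\infty}^{2}}{16(\beta+\rho\epsilon)^{2}\cdot 72\sqrt{e}}.
\]

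Finally I would pass from the $\ell_\infty$ term to the loss inner product required by Definition~\ref{definition:s-strongly-convex-over-KL}. For every $\bm{L}\in\mathcal{L}$, Hölder's inequality gives $|\bm{L}\bullet(\bm{X}-\bm{Y})| \leq \|\mathrm{vec}(\bm{L})\|_{1}\,\|\mathrm{vec}(\bm{X}-\bm{Y})\|_{\infty} \leq g\,\|\mathrm{vec}(\bm{X}-\bm{Y})\|_{\infty}$, hence $\|\mathrm{vec}(\bm{X}-\bm{Y})\|_{\infty}^{2} \geq |\bm{L}\bullet(\bm{X}-\bm{Y})|^{2}/g^{2}$. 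Substituting into the displayed bound and collecting constants ($16\cdot 72 = 1152$) produces exactly the $s$-strong convexity inequality with $s = 1/\bigl(1152\sqrt{e}\,(\beta+\rho\epsilon)^{2}g^{2}\bigr)$. The argument is essentially routine once Lemma~\ref{lemma:primary-form-for-entropy} is in hand; the only step needing genuine care is the bookkeeping around the shift by $\epsilon\bm{\Gamma}^{-1}\bm{\Gamma}^{-1}$ — verifying that it leaves off-diagonal differences untouched while inflating diagonal entries by at most $\rho\epsilon$ — since this is precisely what replaces the factor $\beta^{2}$ of the $\bm{\Gamma}=\bm{E}$ case by $(\beta+\rho\epsilon)^{2}$ here.
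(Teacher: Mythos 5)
Your proof is correct and follows essentially the same route as the paper: the same factorization $\bm{\Gamma X\Gamma}+\epsilon\bm{E}=\bm{\Gamma}(\bm{X}+\epsilon\bm{\Gamma}^{-1}\bm{\Gamma}^{-1})\bm{\Gamma}$, the same bound $\beta'=\beta+\rho\epsilon$ on the shifted diagonals, and the same application of Lemma~\ref{lemma:primary-form-for-entropy} with the constants $16\cdot 72=1152$. The only difference is presentational: where the paper cites Lemma~\ref{lemma:lemma-for-strongly-convex} to produce the admissible $\delta$, you re-derive that inequality directly via H\"older's inequality and the diagonal bound, which is exactly the content of that cited lemma.
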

\begin{proof}
Firstly we know that $\bm{\Gamma X\Gamma} +\epsilon \bm{E}=\bm{\Gamma}( \bm{X}+\bm{\Gamma}^{-1}\epsilon \bm{E \Gamma}^{-1})\bm{\Gamma}.$

Applying the Lemma \ref{lemma:lemma-for-strongly-convex} to $\bm{X}+\bm{\Gamma}^{-1}\epsilon \bm{E\Gamma}^{-1}$ and $\bm{Y} + \bm{\Gamma}^{-1}\epsilon \bm{E\Gamma}^{-1}$ for $\bm{X}, \bm{Y} \in \mathcal{K}$ where $\max_{i,j}|(\bm{X}+\bm{\Gamma}^{-1}\epsilon \bm{E\Gamma}^{-1})_{i,j}| \leq \max_{i,j}|\bm{X}_{i,j}|+\epsilon\rho,$ we have that $\beta^{'}=\beta +\epsilon\rho,$ where $\rho=\max_{i,j}|(\bm{\Gamma}^{-1}\bm{\Gamma}^{-1})_{i,j}|.$ According to Lemma \ref{lemma:primary-form-for-entropy} and Definition \ref{definition:s-strongly-convex-over-KL} we have this proposition.
\qed
\end{proof}

\begin{proof}[Proof of Theorem \ref{theorem:upper-bound-of-OSDP}]
Due to Lemma \ref{lemma:lemma-for-upper-bound-of-strongly-convex} we obtain that
\begin{equation}
\mathrm{Regret_{OSDP}}(T,\mathcal{K},\mathcal{L},\bm{W}^{*})  \leq \frac{H_{0}}{\eta}+\frac{\eta}{s}T.
\end{equation}
Due to above Proposition we know that $s=1/(1152(\beta+\rho\epsilon)^{2}\sqrt{e}g^{2}).$

Thus we need only to show $H_{0} \leq \frac{\tau}{\epsilon}.$
Given $\bm{W}_{0}$ and $\bm{W}_{1}$ is the minimizer and maximizer of $R$ respectively, then we obtain that
\begin{equation}
\begin{split}
&\max_{\bm{W},\bm{W}^{'} \in \mathcal{K}}(R(\bm{W})-R(\bm{W}^{'}))=R(\bm{W}_{1})-R(\bm{W}_{0})\\
&=-\ln \det(\bm{\Gamma W}_{1}\bm{\Gamma} +\epsilon \bm{E})+\ln \det(\bm{\Gamma W}_{0}\bm{\Gamma}+\epsilon \bm{E}) \\
&=\sum_{i=1}^{N}\ln \frac{\lambda_{i}(\bm{\Gamma W}_{0}\bm{\Gamma})+\epsilon}{\lambda_{i}(\bm{\Gamma W}_{1}\bm{\Gamma})+\epsilon}\\
&=\sum_{i=1}^{N}\ln \left(\frac{\lambda_{i}(\bm{\Gamma W}_{0}\bm{\Gamma} )}{\lambda_{i}(\bm{\Gamma W}_{1}\bm{\Gamma})+\epsilon}+\frac{\epsilon}{\lambda_{i}(\bm{\Gamma W}_{1}\bm{\Gamma})+\epsilon}\right)\\
& \leq \sum_{i=1}^{N}\ln \left(\frac{\lambda_{i}(\bm{\Gamma W}_{0}\bm{\Gamma})}{\epsilon}+1\right) \\
&\leq \sum_{i=1}^{N}\frac{\lambda_{i}(\bm{\Gamma W}_{0}\bm{\Gamma})}{\epsilon}=\frac{\mathrm{Tr} (\bm{\Gamma W}_{0}\bm{\Gamma})}{\epsilon} \leq \frac{\tau}{\epsilon}.
\end{split}
\end{equation}

So we have our conclusion.
\qed
\end{proof}

\section{Application to OMC with side information}
In this section, we show that the OMC with side information $\bm{M},\bm{N}$ can be reduced to our generalised OSDP with bounded $\bm{\Gamma}$-trace norm.
The reduction is twofold: Firstly reduce to
an online matrix prediction(OMP) problem with side information $\bm{M},\bm{N}$ and then further reduce to
our generalised OSDP problem. Meanwhile we utilise mistake-driven technique such that we can
bound the number of mistakes without regret bound with respect to $T.$

\subsection{Reduction from OMC to OMP with side information}
First we describe an OMP problem with side information $\bm{M}$ and $\bm{N}$, to which our problem is reduced.
The problem is specified by a competitor class
$\mathcal{X} \subseteq [-1,1]^{m \times n}$ defined as follows.
For any matrix $\bm{A} \in \mathrm{R}^{k \times l}$, we define
\begin{equation*}
	\bar{\bm{A}} = \mathrm{diag}\left(		\frac{1}{\Vert \bm{A}_{1} \Vert_{2}},\cdots, \frac{1}{\Vert \bm{A}_{k}\Vert_{2}}	\right)\bm{A}.
\end{equation*}
That is, $\bar{\bm{A}} \in \mathcal{N}^{k,l},$ is a matrix obtained from $\bm{A}$
by normalising all row vectors.
Then, our competitor class $\mathcal{X}$ is defined as
\begin{equation*}
	\mathcal{X} = \{		\bar{\bm{P}}\bar{\bm{Q}}^{T} : \bm{P}\bm{Q}^{T} \in \mathrm{R}^{m \times n} \land \mathcal{R}_{\bm{M}}\mathrm{Tr} \left(\bar{\bm{P}}^{T} \bm{M} \bar{\bm{P}}\right)+\mathcal{R}_{\bm{N}}\mathrm{Tr} \left( \bar{\bm{Q}}^{T}\bm{N}\bar{\bm{Q}}\right) \leq \widehat{\mathcal{D}}	\},
\end{equation*}
where $\widehat{\mathcal{D}} \geq \mathcal{D},$ and $\widehat{\mathcal{D}}$ is named as \emph{quasi dimension estimator}.

The OMP problem with side information for $\mathcal{X}$ is described as the following protocol.

For each round $t \in [T]$,
\begin{enumerate}
\item the algorithm chooses a matrix $\bm{X}_t \in \mathbb{R}^{m \times n}$,
\item observes a triple $(i_t,j_t,y_t) \in [m]\times[n]\times\{-1,1\}$,
	and then
\item suffers a loss given by $h_\gamma(y_t \bm{X}_{t,(i_t,j_t)})$.
\end{enumerate}
The goal of the algorithm is to minimize the regret:
\[
	\mathrm{Regret}_\mathrm{OMP}(T, \mathcal{X}, \bm{X}^*) =
	\sum_{t=1}^T h_\gamma(y_t \bm{X}_{t,(i_t,j_t)})
	- \sum_{t=1}^T h_\gamma(y_t \bm{X}^*_{(i_t,j_t)}),
\]
for any competitor matrix $\bm{X}^{*} \in \mathcal{X}$.
Note that unlike the standard setting of online prediction,
we do not require $\bm{X}_{t} \in \mathcal{X}$.

Below we give the reduction.
Assume that we have an algorithm $\mathcal{A}$ for the OMP problem.

Run the algorithm $\mathcal{A}$ and receive the first
prediction matrix $\bm{X}_1$ from $\mathcal{A}$.

In each round $t$,
\begin{enumerate}
\item observe an index pair $(i_t,j_t) \in [m] \times [n]$,
\item predict $\hat{y}_{t} = \mathrm{sgn}(\bm{X}_{t,(i_t,j_t)})$,
\item observe a true label $y_{t} \in \{-1,1\}$,
\item if $\hat y_t = y_t$ then $\bm{X}_{t+1} = \bm{X}_t$, and
if $\hat y_t \neq y_t$, then feed $(i_t,j_t,y_t)$ to $\mathcal{A}$ to let it
proceed and receive $\bm{X}_{t+1}$.
\end{enumerate}
Note that we run the algorithm $\mathcal{A}$ in the mistake-driven manner,
and hence $\mathcal{A}$ runs for
$M = \sum_{t=1}^T \mathbb{I}_{\hat{y}_{t} \neq y_{t}}$ rounds,     
where $M$ is the number of mistakes of the reduction algorithm above.

The next lemma shows the performance of the reduction.

\begin{lemma} \label{lem:reduction}
Let $\mathrm{Regret}_\mathrm{OMP}(M,\mathcal{X}, \bm{X}^*)$ denote the regret of
the algorithm $\mathcal{A}$ in the reduction above for a competitor
matrix $\bm{X}^* \in \mathcal{X}$, where
$M = \sum_{t=1}^T \mathbf{1}(\hat y_t \neq y_t)$. Then,
\begin{eqnarray}
M &\leq \inf_{\bm{P}\bm{Q}^{T} \in \mathcal{X}} (\mathrm{Regret}_\mathrm{OMP}(M, \mathcal{X}, \bar{\bm{P}}\bar{\bm{Q}}^{T})+ \mathrm{hloss}(\mathcal{S},(\bm{P},\bm{Q}),\gamma) ) \label{eq:bound1}\\
&\leq 	\sup_{\bm{X}^* \in \mathcal{X}} \mathrm{Regret}_\mathrm{OMP}(M, \mathcal{X}, \bm{X}^{*})+ \mathrm{hloss}(\mathcal{S}, \gamma), \label{eq:bound2}
\end{eqnarray}
where we define that
\begin{equation}
\mathrm{hloss}(\mathcal{S},\gamma)=\min_{\bar{\bm{P}}\bar{\bm{Q}}^{T} \in \mathcal{X}}\mathrm{hloss}(\mathcal{S},(\bm{P},\bm{Q}), \gamma).
\end{equation}
\end{lemma}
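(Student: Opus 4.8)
The plan is to exploit two features of the mistake-driven reduction: a mistake round forces a large hinge loss on the algorithm's own prediction, and $\mathcal{A}$ is updated \emph{only} on mistake rounds, so its regret guarantee applies verbatim to the length-$M$ subsequence of mistakes. Write $\mathcal{M} = \{t \in [T] : \hat y_t \neq y_t\}$, so $|\mathcal{M}| = M$. For each $t \in \mathcal{M}$ we have $\hat y_t = \mathrm{sgn}(\bm{X}_{t,(i_t,j_t)}) \neq y_t$, hence $y_t\bm{X}_{t,(i_t,j_t)} \leq 0$, and therefore $h_\gamma(y_t\bm{X}_{t,(i_t,j_t)}) = 1 - y_t\bm{X}_{t,(i_t,j_t)}/\gamma \geq 1$. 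Summing over $\mathcal{M}$ gives $M \leq \sum_{t\in\mathcal{M}} h_\gamma(y_t\bm{X}_{t,(i_t,j_t)})$.

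Next I would invoke the regret bound of $\mathcal{A}$. Since $\bm{X}_{t+1} = \bm{X}_t$ on non-mistake rounds and $\mathcal{A}$ receives only the triples $(i_t,j_t,y_t)$ with $t \in \mathcal{M}$, the predictions it issues and the losses it incurs are exactly those indexed by $\mathcal{M}$; hence for every competitor $\bm{X}^* \in \mathcal{X}$,
\[
\sum_{t\in\mathcal{M}} h_\gamma(y_t\bm{X}_{t,(i_t,j_t)}) \leq \sum_{t\in\mathcal{M}} h_\gamma(y_t\bm{X}^*_{(i_t,j_t)}) + \mathrm{Regret}_\mathrm{OMP}(M,\mathcal{X},\bm{X}^*).
\]
Now take any factorization $\bm{P}\bm{Q}^T$ whose row-normalization satisfies $\mathcal{R}_{\bm{M}}\mathrm{Tr}(\bar{\bm{P}}^T\bm{M}\bar{\bm{P}}) + \mathcal{R}_{\bm{N}}\mathrm{Tr}(\bar{\bm{Q}}^T\bm{N}\bar{\bm{Q}}) \leq \widehat{\mathcal{D}}$, so that $\bm{X}^* := \bar{\bm{P}}\bar{\bm{Q}}^T \in \mathcal{X}$. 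For this $\bm{X}^*$ we have $\bm{X}^*_{(i_t,j_t)} = \bar{\bm{P}}_{i_t}\bar{\bm{Q}}_{j_t}^T = \bm{P}_{i_t}\bm{Q}_{j_t}^T/(\|\bm{P}_{i_t}\|_2\|\bm{Q}_{j_t}\|_2)$, which is precisely the argument appearing in $\mathrm{hloss}(\mathcal{S},(\bm{P},\bm{Q}),\gamma)$. Since $h_\gamma \geq 0$, extending the sum from $\mathcal{M}$ to all of $[T]$ only increases it, so
\[
\sum_{t\in\mathcal{M}} h_\gamma(y_t\bm{X}^*_{(i_t,j_t)}) \leq \sum_{t=1}^T h_\gamma\!\left(\frac{y_t\bm{P}_{i_t}\bm{Q}_{j_t}^T}{\|\bm{P}_{i_t}\|_2\|\bm{Q}_{j_t}\|_2}\right) = \mathrm{hloss}(\mathcal{S},(\bm{P},\bm{Q}),\gamma).
\]

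Chaining the three displays gives $M \leq \mathrm{Regret}_\mathrm{OMP}(M,\mathcal{X},\bar{\bm{P}}\bar{\bm{Q}}^T) + \mathrm{hloss}(\mathcal{S},(\bm{P},\bm{Q}),\gamma)$ for every admissible pair $(\bm{P},\bm{Q})$; taking the infimum over such pairs yields (\ref{eq:bound1}). For (\ref{eq:bound2}) I would bound $\mathrm{Regret}_\mathrm{OMP}(M,\mathcal{X},\bar{\bm{P}}\bar{\bm{Q}}^T) \leq \sup_{\bm{X}^*\in\mathcal{X}}\mathrm{Regret}_\mathrm{OMP}(M,\mathcal{X},\bm{X}^*)$ inside the infimum, after which the infimum acts only on the hinge-loss term and equals $\mathrm{hloss}(\mathcal{S},\gamma)$ by definition. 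The argument is essentially bookkeeping; the one point that needs care is making precise that $\mathcal{A}$'s regret guarantee transfers to the mistake subsequence, so that the ``$M$ rounds'' in $\mathrm{Regret}_\mathrm{OMP}(M,\cdot,\cdot)$ are exactly the rounds $\mathcal{A}$ sees, together with the elementary identity $(\bar{\bm{P}}\bar{\bm{Q}}^T)_{i,j} = \bm{P}_i\bm{Q}_j^T/(\|\bm{P}_i\|_2\|\bm{Q}_j\|_2)$ that links the OMP loss to the OMC hinge loss.
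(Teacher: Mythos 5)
Your proposal is correct and follows essentially the same route as the paper's own proof: lower-bound each mistake by the hinge loss of the algorithm's prediction on that round, split via the definition of $\mathrm{Regret}_\mathrm{OMP}$ over the $M$ mistake rounds, extend the comparator's hinge-loss sum to all of $[T]$ by nonnegativity, use the identity $(\bar{\bm{P}}\bar{\bm{Q}}^{T})_{i,j} = \bm{P}_i\bm{Q}_j^{T}/(\|\bm{P}_i\|_2\|\bm{Q}_j\|_2)$, and then take the infimum (resp.\ sup/min) to obtain the two bounds. Your explicit remark that the mistake-driven update makes the regret guarantee apply verbatim to the length-$M$ subsequence is a point the paper leaves implicit, but it does not change the argument.
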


\begin{remark}\label{rem:original-case}
If $\bm{M}$ and $\bm{N}$ are identity matrices, then we have that $\mathcal{R}_{\bm{M}}\mathrm{Tr} \left(\bar{\bm{P}}^{T} \bm{M} \bar{\bm{P}}\right)+\mathcal{R}_{\bm{N}}\mathrm{Tr} \left( \bar{\bm{Q}}^{T}\bm{N}\bar{\bm{Q}}\right)=m+n.$ In this case $\mathcal{X}=\lbrace \bar{\bm{P}}\bar{\bm{Q}}^{T} : \bm{P}\bm{Q}^{T} \in \mathbb{R}^{m \times n} \rbrace,$ and $\widehat{\mathcal{D}}=m+n.$
\end{remark}

\begin{proof}
Let $\bm{P}$ and $\bm{Q}$ be arbitrary matrices such that
$\bm{P}\bm{Q}^{T} \in \mathbb{R}^{m \times n}$.
Since $\mathbf{1}(\mathrm{sgn}(x) \neq y) \leq h_\gamma(y x)$
for any $x \in \mathbb{R}$ and $y \in \{-1,1\}$, we have
\begin{eqnarray*}
	M & = & \sum_{t=1}^T \mathbf{1}(\hat y_t \neq y_t)
	  \leq
	\sum_{ \lbrace t: \hat{y}_{t} \neq y_{t} \rbrace} h_\gamma(y_t\bm{X}_{t,(i_t,j_t)}) \\
    & = &
	\mathrm{Regret}_\mathrm{OMP}(M,\mathcal{X},\bar{\bm{P}}\bar{\bm{Q}}^{T}) +
	\sum_{ \lbrace t: \hat{y}_{t} \neq y_{t} \rbrace} h_\gamma(y_t (\bar{\bm{P}}\bar{\bm{Q}}^{T})_{i_t,j_t}) \\
	& \leq &
	\mathrm{Regret}_\mathrm{OMP}(M,\mathcal{X},\bar{\bm{P}}\bar{\bm{Q}}^{T}) +
	\sum_{t=1}^T h_\gamma(y_t (\bar{\bm{P}}\bar{\bm{Q}}^{T})_{i_t,j_t}) \\
	& = &
	\mathrm{Regret}_\mathrm{OMP}(M,\mathcal{X},\bar{\bm{P}}\bar{\bm{Q}}^{T}) +
		\mathrm{hloss}(\mathcal{S},(\bm{P},\bm{Q}),\gamma),
\end{eqnarray*}
where the second equality follows from the definition of regret,
and the third equality follows from the fact that
$(\bar{\bm{P}}\bar{\bm{Q}}^{T})_{i,j} =
\bm{P}_{i}\bm{Q}_{j}^{T}/(\|\bm{P}_i\|_2 \|\bm{Q}_j\|_2)$.
Since $\bm{P}$ and $\bm{Q}$ are chosen arbitrarily,
we get (\ref{eq:bound1}).

Now, let $\bm{P}$ and $\bm{Q}$ be the matrices that attain
(\ref{equation:hloss2}). Then, the inequality above implies that
\begin{equation*}
\begin{split}
M &\leq \mathrm{Regret}_\mathrm{OMP}(M,\mathcal{X},\bar{\bm{P}}\bar{\bm{Q}}^{T}) +
		\mathrm{hloss}(\mathcal{S},\gamma) \\
&\leq	\sup_{\bm{X}^* \in \mathcal{X}} \mathrm{Regret}_\mathrm{OMP}(M, \mathcal{X}, \bm{X}^*)
		+ \mathrm{hloss}(\mathcal{S}, \gamma),
\end{split}
\end{equation*}
which proves (\ref{eq:bound2}).
\qed
\end{proof}

\subsection{Reduction from OMP with side information to generalised OSDP with bounded $\bm{\Gamma}$-trace norm}
A similar technique is used
in \cite{herbster2016mistake} and \cite{hazan2012near}. For side information matrix $\bm{M},\bm{N}$ we define a matrix $\bm{\Gamma}$ for our generalised OSDP as follows:
\begin{equation}
\bm{\Gamma}=\begin{bmatrix}
\sqrt{\mathcal{R}_{\bm{M}}\bm{M}} & 0\\
0 & \sqrt{\mathcal{R}_{\bm{N}}\bm{N}}
\end{bmatrix}.
\end{equation}

Next we define the decision class $\mathcal{K}$.
Let $N = m + n$, and for any matrices $\bm{P}$ and $\bm{Q}$
such that $\bm{P}\bm{Q}^{T} \in \mathbb{R}^{m \times n}$, we define
\[
	\bm{W}_{\bm{P},\bm{Q}} =
		\begin{bmatrix}
			\bar{\bm{P}} \\
			\bar{\bm{Q}}
		\end{bmatrix}
		\begin{bmatrix}
			\bar{\bm{P}}^{T} & \bar{\bm{Q}}^{T}
		\end{bmatrix}
	=
		\begin{bmatrix}
			\bar{\bm{P}}\bar{\bm{P}}^{T} & \bar{\bm{P}}\bar{\bm{Q}}^{T} \\
			\bar{\bm{Q}}\bar{\bm{P}}^{T} & \bar{\bm{Q}}\bar{\bm{Q}}^{T}
		\end{bmatrix}.
\]
Note that $\bm{W}_{\bm{P},\bm{Q}}$ is an $N \times N$ symmetric and
positive semi-definite matrix with its
upper right $m \times n$ component matrix
$\bar{\bm{P}}\bar{\bm{Q}}^{T}$ is a competitor matrix for the OMP problem.
So, intuitively, $\bm{W}_{\bm{P},\bm{Q}}$ can be viewed as a
positive semi-definite embedding of $\bar{\bm{P}}\bar{\bm{Q}}^{T} \in \mathcal{X}$.
Then, the decision class is any convex set $\mathcal{K} \in \mathbb{S}^{N \times N}_{++}$
that satisfies
\[
	\mathcal{K} \supseteq
		\{\bm{W}_{\bm{P},\bm{Q}} : \bm{P}\bm{Q}^{T} \in \mathbb{R}^{m \times n} \}.
\]
In this paper, we choose
\begin{equation}
\label{eq:mathcalK}
	\mathcal{K} = \{\bm{W} \in \mathbb{S}^{N \times N}_{++} : \forall i \in [n], \bm{W}_{i,i} \leq 1 \land   \mathrm{Tr}(\bm{\Gamma W \Gamma}) \leq \widehat{\mathcal{D}}   \}.
\end{equation}

Then, we define the loss matrix class $\mathcal{L}$.
For any $(i,j) \in [m] \times [n]$, let
$\bm{Z}\langle i,j\rangle \in \mathbb{S}^{N \times N}_{+}$
be a matrix such that the $(i,m+j)$-th and $(m+j,i)$-th components
are 1 and the other components are 0. More formally,
\[
	\bm{Z}\langle i,j \rangle = \frac{1}{2}
		\left(\bm{e}_i \bm{e}_{m+j}^{T} + \bm{e}_{m+j} \bm{e}_i^{T}\right),
\]
where $\bm{e}_{k}$ is the $k$-th basis vector of $\mathbb{R}^N$.
Note that when we focus on its upper right $m \times n$ component
matrix, then only the $(i,j)$-th component is 1.
Then, $\mathcal{L}$ is
\begin{equation}
\label{eq:calL}
	\mathcal{L} = \left\{
		c \bm{Z} \langle i,j \rangle
		: c \in \{-1/\gamma, 1/\gamma\}, i \in [m], j \in [n]
	\right\}.
\end{equation}

Now we are ready to describe the reduction from the OMP problem
for $\mathcal{X}$ to the OSDP problem $(\mathcal{K},\mathcal{L})$.
Let $\mathcal{A}$ be an algorithm for the OSDP problem.

Run the algorithm $\mathcal{A}$ and receive the first prediction
matrix $\bm{W}_1 \in \mathcal{K}$ from $\mathcal{A}$.

In each round $t$,
\begin{enumerate}
\item let $\bm{X}_t$ be the upper right $m \times n$ component matrix
	of $\bm{W}_t$. \\
	// $\bm{X}_{t,(i,j)} = \bm{W}_t \bullet \bm{Z} \langle i,j \rangle$
\item observe a triple $(i_t,j_t,y_t) \in [m] \times [n] \times \{-1,1\}$,
\item suffer loss $\ell_t(\bm{W}_t)$ where $\ell_t:
	\bm{W} \mapsto h_\gamma(y_t(\bm{W} \bullet \bm{Z} \langle i_t,j_t \rangle))$,
\item let $\bm{L}_t = \nabla_{\bm{W}} \ell_t(\bm{W}_{t})
	= \begin{cases}
		-\frac{y_t}{\gamma}\bm{Z} \langle i_t,j_t \rangle &
		\text{if $y_t\bm{X}_{t,(i,j)} \leq \gamma$} \\
		0 & \text{otherwise}
	\end{cases}$,
\item feed $\bm{L}_t$ to the algorithm $\mathcal{A}$ to let it proceed
	and receive $\bm{W}_{t+1}$.
\end{enumerate}

Since the loss function $\ell_t$ is convex,
a standard linearlization argument (\cite{shalev2012online})
gives
\[
	\ell_t(\bm{W}_t) - \ell_t(\bm{W}^*)
	 \leq \bm{W}_t \bullet \bm{L}_t - \bm{W}^{*} \bullet \bm{L}_t
\]
for any $\bm{W}^* \in \mathcal{K}$.
Moreover, since
$\ell_t(\bm{W}_t) = h_\gamma(y_t \bm{X}_{t,(i_t,j_t)})$ and
$\ell_t(\bm{W}_{\bm{P},\bm{Q}}) =
h_{\gamma}(y_t(\bar{\bm{P}}\bar{\bm{Q}}^{T})_{i_t,j_t})$,
the following lemma immediately follows.

\begin{lemma} \label{lem:reduction2}
Let $\mathrm{Regret}_\mathrm{OSDP}(T,\mathcal{K},\mathcal{L},\bm{W}_{\bm{P},\bm{Q}})
= \sum_{t=1}^T (\bm{W}_t - \bm{W}_{\bm{P},\bm{Q}}) \bullet \bm{L}_t$
denote the regret
of the algorithm $\mathcal{A}$ in the reduction above for a competitor
matrix $\bm{W}_{\bm{P},\bm{Q}}$
and
$\mathrm{Regret}_\mathrm{OMP}(T,\mathcal{X},\bar{\bm{P}}\bar{\bm{Q}}^{T})
= \sum_{t=1}^T (h_\gamma(y_t \bm{X}_{t,(i_t,j_t)})
- h_\gamma(y_t (\bar{\bm{P}}\bar{\bm{Q}}^{T})_{i_t,j_t})$ denote the regret
of the reduction algorithm for $\bar{\bm{P}}\bar{\bm{Q}}^{T}$.
Then,
\[
	\mathrm{Regret}_\mathrm{OMP}(T,\mathcal{X},\bar{\bm{P}}\bar{\bm{Q}}^{T})
	\leq
	\mathrm{Regret}_\mathrm{OSDP}(T,\mathcal{K},\mathcal{L},\bm{W}_{\bm{P},\bm{Q}}).
\]
\end{lemma}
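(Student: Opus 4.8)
The plan is to obtain the inequality by summing, over the $T$ rounds, the per-round linearization bound $\ell_t(\bm{W}_t)-\ell_t(\bm{W}^{*})\leq(\bm{W}_t-\bm{W}^{*})\bullet\bm{L}_t$ recorded just above the lemma, taken with the single fixed competitor $\bm{W}^{*}=\bm{W}_{\bm{P},\bm{Q}}$. For this step to be legitimate I first have to confirm that $\bm{W}_{\bm{P},\bm{Q}}$ is an admissible competitor, i.e.\ $\bm{W}_{\bm{P},\bm{Q}}\in\mathcal{K}$, whenever $\bar{\bm{P}}\bar{\bm{Q}}^{T}\in\mathcal{X}$.

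To verify membership I would check the two constraints defining $\mathcal{K}$ in (\ref{eq:mathcalK}). The diagonal constraint is immediate: the diagonal blocks of $\bm{W}_{\bm{P},\bm{Q}}$ are $\bar{\bm{P}}\bar{\bm{P}}^{T}$ and $\bar{\bm{Q}}\bar{\bm{Q}}^{T}$, whose diagonal entries equal $\|\bar{\bm{P}}_i\|_2^{2}=1$ and $\|\bar{\bm{Q}}_j\|_2^{2}=1$ because $\bar{\bm{P}}\in\mathcal{N}^{m,d}$ and $\bar{\bm{Q}}\in\mathcal{N}^{n,d}$. For the $\bm{\Gamma}$-trace constraint, substituting the block form of $\bm{\Gamma}$ gives
\[
	\bm{\Gamma}\bm{W}_{\bm{P},\bm{Q}}\bm{\Gamma}
	=\begin{bmatrix}\sqrt{\mathcal{R}_{\bm{M}}\bm{M}}\,\bar{\bm{P}}\\ \sqrt{\mathcal{R}_{\bm{N}}\bm{N}}\,\bar{\bm{Q}}\end{bmatrix}
	 \begin{bmatrix}\bar{\bm{P}}^{T}\sqrt{\mathcal{R}_{\bm{M}}\bm{M}}&\bar{\bm{Q}}^{T}\sqrt{\mathcal{R}_{\bm{N}}\bm{N}}\end{bmatrix},
\]
so by cyclicity of the trace $\mathrm{Tr}(\bm{\Gamma}\bm{W}_{\bm{P},\bm{Q}}\bm{\Gamma})=\mathcal{R}_{\bm{M}}\mathrm{Tr}(\bar{\bm{P}}^{T}\bm{M}\bar{\bm{P}})+\mathcal{R}_{\bm{N}}\mathrm{Tr}(\bar{\bm{Q}}^{T}\bm{N}\bar{\bm{Q}})\leq\widehat{\mathcal{D}}$, the last inequality being exactly the condition that puts $\bar{\bm{P}}\bar{\bm{Q}}^{T}$ into $\mathcal{X}$. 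Since $\bm{W}_{\bm{P},\bm{Q}}$ is only positive semi-definite, I would recover strict positive definiteness by the usual vanishing perturbation (or by passing to the closure of $\mathcal{K}$), which does not change the regret bound.

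Given feasibility, summing the linearization bound over $t=1,\dots,T$ yields $\sum_{t=1}^{T}\left(\ell_t(\bm{W}_t)-\ell_t(\bm{W}_{\bm{P},\bm{Q}})\right)\leq\sum_{t=1}^{T}(\bm{W}_t-\bm{W}_{\bm{P},\bm{Q}})\bullet\bm{L}_t=\mathrm{Regret}_\mathrm{OSDP}(T,\mathcal{K},\mathcal{L},\bm{W}_{\bm{P},\bm{Q}})$. Then I substitute the two identities stated above, $\ell_t(\bm{W}_t)=h_\gamma(y_t\bm{X}_{t,(i_t,j_t)})$ and $\ell_t(\bm{W}_{\bm{P},\bm{Q}})=h_\gamma(y_t(\bar{\bm{P}}\bar{\bm{Q}}^{T})_{i_t,j_t})$, which make the left-hand side equal to $\mathrm{Regret}_\mathrm{OMP}(T,\mathcal{X},\bar{\bm{P}}\bar{\bm{Q}}^{T})$ by definition, giving the claim. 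The only genuine content is the feasibility check, and among its pieces the trace identity above; the rest is the one-line telescoping of the linearization inequality, so I expect the (minor) positive-definiteness technicality to be the sole obstacle.
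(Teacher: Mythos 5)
Your proposal is correct and follows essentially the same route as the paper: the lemma is exactly the telescoped linearization bound $\ell_t(\bm{W}_t)-\ell_t(\bm{W}_{\bm{P},\bm{Q}})\leq(\bm{W}_t-\bm{W}_{\bm{P},\bm{Q}})\bullet\bm{L}_t$ combined with the identities $\ell_t(\bm{W}_t)=h_\gamma(y_t\bm{X}_{t,(i_t,j_t)})$ and $\ell_t(\bm{W}_{\bm{P},\bm{Q}})=h_\gamma(y_t(\bar{\bm{P}}\bar{\bm{Q}}^{T})_{i_t,j_t})$. Your explicit feasibility check of $\bm{W}_{\bm{P},\bm{Q}}\in\mathcal{K}$ (the trace identity and the unit-diagonal observation) is material the paper handles by requiring $\mathcal{K}\supseteq\{\bm{W}_{\bm{P},\bm{Q}}\}$ by construction and by citing Lemma~8 of Herbster et al.\ later, and your remark about $\bm{W}_{\bm{P},\bm{Q}}$ being only positive semi-definite correctly flags a minor imprecision the paper glosses over.
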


Combining Lemma~\ref{lem:reduction} and Lemma~\ref{lem:reduction2},
we have the following corollary.

\begin{corollary}
\label{cor:reduction}
Assume that we have an algorithm for the OSDP problem
$(\mathcal{K},\mathcal{L})$ with
regret bound $\mathrm{Regret}_\mathrm{OSDP}(T,\mathcal{K},\mathcal{L},\bm{W}^*)$
for any $\bm{W}^{*} \in \mathcal{K}$.
Then, there exists an algorithm for the binary matrix completion
problem with the following mistake bounds.
\begin{equation*}
\begin{split}
M & \leq \inf_{\bm{P}\bm{Q}^{T} \in \mathcal{X}} (
\mathrm{Regret}_\mathrm{OSDP}(M,\mathcal{K},\mathcal{L},\bm{W}_{\bm{P},\bm{Q}})\\
&+ \mathrm{hloss}(\mathcal{S},(\bm{P},\bm{Q}), \gamma)) \\
& \leq	\sup_{\bm{W}^* \in \mathcal{K}} \mathrm{Regret}_\mathrm{OSDP}(M,\mathcal{K},\mathcal{L},\bm{W}^*)
		+ \mathrm{hloss}(\mathcal{S}, \gamma).
\end{split}
\end{equation*}
\end{corollary}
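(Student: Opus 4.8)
The plan is to prove the corollary purely by composing the two reductions already set up: feed the assumed OSDP algorithm $\mathcal{A}$ for $(\mathcal{K},\mathcal{L})$ into the OMP-to-OSDP reduction to obtain an OMP algorithm $\mathcal{A}'$ for the class $\mathcal{X}$, and then feed $\mathcal{A}'$ into the OMC-to-OMP reduction to obtain the claimed binary matrix completion algorithm. No new analysis is needed beyond Lemma~\ref{lem:reduction} and Lemma~\ref{lem:reduction2}; the only thing that needs care is the horizon that appears in the various regret terms.

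First I would fix the horizon. The outer (OMC-to-OMP) reduction is mistake-driven, so $\mathcal{A}'$ is invoked only on the $M=\sum_{t=1}^{T}\mathbb{I}_{\hat y_t\neq y_t}$ rounds on which a mistake occurs, and since $\mathcal{A}'$ invokes $\mathcal{A}$ once per round it is run, $\mathcal{A}$ is likewise invoked on exactly those $M$ rounds; hence every regret quantity below is over a horizon of $M$ rounds. (On each such round $\mathrm{sgn}(\bm{X}_{t,(i_t,j_t)})\neq y_t$, hence $y_t\bm{X}_{t,(i_t,j_t)}\le 0\le\gamma$, so the matrix $\bm{L}_t=-\tfrac{y_t}{\gamma}\bm{Z}\langle i_t,j_t\rangle$ fed to $\mathcal{A}$ is a genuine nonzero element of $\mathcal{L}$, so the OSDP regret bound is applicable.) Then I would apply Lemma~\ref{lem:reduction} with horizon $M$ to obtain
\[
M\le\inf_{\bm{P}\bm{Q}^{T}\in\mathcal{X}}\bigl(\mathrm{Regret}_\mathrm{OMP}(M,\mathcal{X},\bar{\bm{P}}\bar{\bm{Q}}^{T})+\mathrm{hloss}(\mathcal{S},(\bm{P},\bm{Q}),\gamma)\bigr)\le\sup_{\bm{X}^{*}\in\mathcal{X}}\mathrm{Regret}_\mathrm{OMP}(M,\mathcal{X},\bm{X}^{*})+\mathrm{hloss}(\mathcal{S},\gamma),
\]
and Lemma~\ref{lem:reduction2} with $T=M$ to replace each OMP regret by the corresponding OSDP regret, $\mathrm{Regret}_\mathrm{OMP}(M,\mathcal{X},\bar{\bm{P}}\bar{\bm{Q}}^{T})\le\mathrm{Regret}_\mathrm{OSDP}(M,\mathcal{K},\mathcal{L},\bm{W}_{\bm{P},\bm{Q}})$. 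Substituting into the infimum bound gives the first inequality of the corollary directly. For the supremum bound I would additionally verify that $\bm{W}_{\bm{P},\bm{Q}}\in\mathcal{K}$ whenever $\bar{\bm{P}}\bar{\bm{Q}}^{T}\in\mathcal{X}$: the diagonal entries of $\bm{W}_{\bm{P},\bm{Q}}$ are $\|\bar{\bm{P}}_i\|_2^{2}=\|\bar{\bm{Q}}_j\|_2^{2}=1$, and by the cyclic property of the trace $\mathrm{Tr}(\bm{\Gamma}\bm{W}_{\bm{P},\bm{Q}}\bm{\Gamma})=\mathcal{R}_{\bm{M}}\mathrm{Tr}(\bar{\bm{P}}^{T}\bm{M}\bar{\bm{P}})+\mathcal{R}_{\bm{N}}\mathrm{Tr}(\bar{\bm{Q}}^{T}\bm{N}\bar{\bm{Q}})\le\widehat{\mathcal{D}}$, so $\bm{W}_{\bm{P},\bm{Q}}$ satisfies both constraints defining $\mathcal{K}$ in (\ref{eq:mathcalK}). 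Hence $\sup_{\bm{X}^{*}\in\mathcal{X}}\mathrm{Regret}_\mathrm{OMP}(M,\mathcal{X},\bm{X}^{*})\le\sup_{\bm{W}^{*}\in\mathcal{K}}\mathrm{Regret}_\mathrm{OSDP}(M,\mathcal{K},\mathcal{L},\bm{W}^{*})$, and combining with the bound above yields the second inequality.

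I do not expect a genuine obstacle here: the statement is a composition corollary and all the substance lives in the two lemmas. The one point demanding attention is that the horizon in every regret term must be the realised mistake count $M$ rather than $T$ — this is not circular, since $M$ is a fixed (if only a posteriori known) integer and both lemmas are stated for an arbitrary horizon, so they apply verbatim with that horizon instantiated to $M$. The only routine computation is the trace identity used to confirm $\bm{W}_{\bm{P},\bm{Q}}\in\mathcal{K}$, which is what licenses passing from a supremum over $\mathcal{X}$ to a supremum over $\mathcal{K}$.
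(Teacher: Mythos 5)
Your proposal is correct and is essentially the paper's own argument: the paper derives Corollary~\ref{cor:reduction} simply by "combining Lemma~\ref{lem:reduction} and Lemma~\ref{lem:reduction2}" with the horizon instantiated to the realised mistake count $M$, exactly as you do. Your additional checks (that $\bm{W}_{\bm{P},\bm{Q}}\in\mathcal{K}$ and that the loss matrices fed to $\mathcal{A}$ lie in $\mathcal{L}$) are details the paper leaves implicit, and they are verified correctly.
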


\subsection{Application to matrix completion}
For the generalised OSDP problem $(\mathcal{K},\mathcal{L})$ with bounded $\bm{\Gamma}$-trace norm defined in
(\ref{eq:mathcalK}) and (\ref{eq:calL}), where $\bm{\Gamma}$ is respect to side information matrices $\bm{M}$ and $\bm{N},$
we apply FTRL algorithm with
the generalised log-determinant regularizer.
Specifically, the FTRL algorithm makes predictions according to
the following formula:
\begin{equation}
\label{eq:FTRL}
	\bm{W}_{t+1} = \arg\min_{\bm{W} \in \mathcal{K}}
		-\ln \det(\bm{\Gamma W \Gamma} + \epsilon \bm{E})
		+ \eta \sum_{s=1}^t \bm{W} \bullet \bm{L}_{s},
\end{equation}
where $\epsilon > 0$ and $\eta > 0$ are parameters.

Moreover the following lemma shows us the quasi-dimension with respect to side information matrices $\bm{M},\bm{N} \in \mathbb{S}^{N \times N}_{++}.$ Again, $\bm{M}$ and $\bm{N}$ are identity matrices, if the side information is vacuous. In this case our generalised log-determinant regularizer becomes the regular form as $-\ln \det (\bm{W}+\epsilon \bm{E}).$
\begin{lemma}[Lemma 8 \cite{herbster2020online}]
Given side information matrices $\bm{M,N} \in \mathbb{S}^{N \times N}_{++},$ we define $\bm{\Gamma}$ as
\begin{equation}{\label{equation:side-information}}
\bm{\Gamma}=\begin{bmatrix}
			\sqrt{\mathcal{R}_{\bm{M}}}\sqrt{\bm{M}} & 0\\
			0 & \sqrt{\mathcal{R}_{\bm{N}}}\sqrt{\bm{N}}
		\end{bmatrix}.
\end{equation}
Then we obtain that
\begin{equation}
\mathrm{Tr}(\bm{\Gamma W_{P,Q}\Gamma})=\mathcal{R}_{\bm{M}}\mathrm{Tr} \left(\bar{\bm{P}}^{T} \bm{M} \bar{\bm{P}}\right)+\mathcal{R}_{\bm{N}}\mathrm{Tr} \left( \bar{\bm{Q}}^{T}\bm{N}\bar{\bm{Q}}\right).
\end{equation}
\end{lemma}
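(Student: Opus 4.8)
The plan is a direct computation that exploits the block structure of $\bm{\Gamma}$ together with the cyclic invariance of the trace. First I would record the elementary facts about the square root: since $\bm{M},\bm{N}\in\mathbb{S}^{N\times N}_{++}$ (more precisely the $m\times m$ and $n\times n$ blocks), their symmetric positive definite square roots $\sqrt{\bm{M}},\sqrt{\bm{N}}$ exist, are symmetric, and satisfy $\sqrt{\bm{M}}\sqrt{\bm{M}}=\bm{M}$ and $\sqrt{\bm{N}}\sqrt{\bm{N}}=\bm{N}$. Hence $\bm{\Gamma}$ in~(\ref{equation:side-information}) is symmetric, and squaring the block-diagonal form gives
\[
	\bm{\Gamma}^{2}=\begin{bmatrix}\mathcal{R}_{\bm{M}}\bm{M}&0\\0&\mathcal{R}_{\bm{N}}\bm{N}\end{bmatrix}.
\]

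Next I would write $\bm{W}_{\bm{P},\bm{Q}}=\bm{V}\bm{V}^{T}$ with $\bm{V}=\begin{bmatrix}\bar{\bm{P}}\\\bar{\bm{Q}}\end{bmatrix}$, so that $\bm{\Gamma W}_{\bm{P},\bm{Q}}\bm{\Gamma}=(\bm{\Gamma}\bm{V})(\bm{\Gamma}\bm{V})^{T}$ is positive semidefinite. In particular all its eigenvalues are nonnegative, so the $\bm{\Gamma}$-trace norm $\mathrm{Tr}(\bm{\Gamma W}_{\bm{P},\bm{Q}}\bm{\Gamma})=\sum_i|\lambda_i(\bm{\Gamma W}_{\bm{P},\bm{Q}}\bm{\Gamma})|$ coincides with the ordinary trace. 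Applying the cyclic property of the trace,
\[
	\mathrm{Tr}\bigl((\bm{\Gamma}\bm{V})(\bm{\Gamma}\bm{V})^{T}\bigr)=\mathrm{Tr}\bigl(\bm{V}^{T}\bm{\Gamma}^{2}\bm{V}\bigr).
\]

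Finally I would expand $\bm{V}^{T}\bm{\Gamma}^{2}\bm{V}$ using the block forms just computed: because the off-diagonal blocks of $\bm{\Gamma}^{2}$ vanish,
\[
	\bm{V}^{T}\bm{\Gamma}^{2}\bm{V}=\mathcal{R}_{\bm{M}}\,\bar{\bm{P}}^{T}\bm{M}\bar{\bm{P}}+\mathcal{R}_{\bm{N}}\,\bar{\bm{Q}}^{T}\bm{N}\bar{\bm{Q}},
\]
and taking traces (by linearity) yields exactly $\mathcal{R}_{\bm{M}}\mathrm{Tr}\!\left(\bar{\bm{P}}^{T}\bm{M}\bar{\bm{P}}\right)+\mathcal{R}_{\bm{N}}\mathrm{Tr}\!\left(\bar{\bm{Q}}^{T}\bm{N}\bar{\bm{Q}}\right)$, which is the asserted identity.

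There is essentially no real obstacle here: the proof is a one-line trace manipulation once the bookkeeping is in place. The only points that merit a word of care are (i) verifying that $\bm{\Gamma}$ squares to the stated block-diagonal matrix, which uses $\sqrt{\bm{M}}^{T}\sqrt{\bm{M}}=\bm{M}$ and $\bigl(\sqrt{\mathcal{R}_{\bm{M}}}\bigr)^{2}=\mathcal{R}_{\bm{M}}$, and (ii) observing that $\mathrm{Tr}$, defined earlier as a trace norm, equals the ordinary trace on the positive semidefinite matrix $\bm{\Gamma W}_{\bm{P},\bm{Q}}\bm{\Gamma}$, so that the cyclic-trace step is legitimate.
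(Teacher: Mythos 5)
Your computation is correct: $\bm{\Gamma}$ is symmetric block-diagonal, so $\bm{\Gamma W}_{\bm{P},\bm{Q}}\bm{\Gamma}=(\bm{\Gamma V})(\bm{\Gamma V})^{T}$ is positive semidefinite (making the $\bm{\Gamma}$-trace norm the ordinary trace), and the cyclic/block expansion gives exactly the claimed identity. The paper itself supplies no proof of this statement --- it is imported verbatim as Lemma~8 of Herbster et al.~\cite{herbster2020online} --- so your self-contained one-line trace manipulation is the natural argument and, if anything, more explicit than what the paper offers; your two points of care (that $\bm{\Gamma}^{2}=\mathrm{diag}(\mathcal{R}_{\bm{M}}\bm{M},\mathcal{R}_{\bm{N}}\bm{N})$ and that $\mathrm{Tr}$ as defined coincides with the usual trace on PSD matrices) are exactly the right ones to flag.
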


\begin{remark}
Since the definition of $\bm{\Gamma}$ in Equation (\ref{equation:side-information}), we have that $\rho=1.$
\end{remark}

Thus we set $\beta=1,$ $g = 1/\gamma,$ $\epsilon=\rho=1,$$\tau=\widehat{\mathcal{D}},$ and $\bm{\Gamma}$ is given as in Equation (\ref{equation:side-information})
next
utilise Theorem \ref{theorem:upper-bound-of-OSDP}, so we get the following result

\begin{equation}
\label{eq:regret}
	\mathrm{Regret}_\mathrm{OSDP}(T,\mathcal{K},\mathcal{L}, \bm{W}^*)
	= O \left( \frac{T\eta}{\gamma^2} + \frac{\widehat{\mathcal{D}}}{\eta} \right).
\end{equation}

Before stating our main result,
we give in Algorithm~\ref{alg:main}
the algorithm for the OMC problem with side information $\bm{M}, \bm{N}$
which is obtained by putting together the two reductions
with the FTRL algorithm (\ref{eq:FTRL}).

\begin{algorithm}
\caption{Online matrix completion with side information algorithm}
\label{alg:main}
\begin{algorithmic}[1]
\STATE Parameters: $\gamma > 0$, $\eta > 0,$ side information matrices $\bm{M} \in \mathbb{S}^{m \times m}_{++}$ and $\bm{N} \in \mathbb{S}^{n \times n}_{++}.$ Quasi dimension estimator $1 \leq \widehat{\mathcal{D}}$ $\bm{\Gamma}$ is composed as in Equation (\ref{equation:side-information}), and decision set $\mathcal{K}$ is given as (\ref{eq:mathcalK}).
\STATE Initialize $ \forall \bm{W} \in \mathcal{K},$ setting $\bm{W}_{1} =\bm{W}$.
\FOR {$t=1,2,\dots,T$}
\STATE Receive $(i_t,j_t) \in [m] \times [n]$.
\STATE Let $\bm{Z}_t = \frac{1}{2}(
	\bm{e}_{i_t}\bm{e}_{m+j_t}^{T} +
	\bm{e}_{m+j_t}\bm{e}_{i_t}^{T})$.
\STATE Predict $\hat{y}_t = \mathrm{sgn}(\bm{W}_t \bullet \bm{Z}_t)$ and
 receive $y_t \in \{-1,1\}$.
\IF{$\hat y_t \neq y_t$}
\STATE Let $\bm{L}_t = \frac{-y_t}{\gamma} \bm{Z}_t$ and
$\bm{W}_{t+1} = \arg\min_{\bm{W} \in\mathcal{K}} -\ln \det(\bm{\Gamma W \Gamma} + \bm{E})
	+ \eta \sum_{s=1}^t \bm{W} \bullet \bm{L}_s$.
\ELSE
\STATE Let $\bm{L}_t = 0$ and $\bm{W}_{t+1} = \bm{W}_{t}$.
\ENDIF
\ENDFOR
\end{algorithmic}
\end{algorithm}

Usually we set $\eta = \sqrt{\gamma^2\widehat{\mathcal{D}}/T}$ to minimize
(\ref{eq:regret}), we obtain $O\left(\sqrt{\widehat{\mathcal{D}}T/\gamma^2}\right)$ regret bound.
But in our case, the horizon $T$ is set to be the number of mistakes $M$
through the reduction, which is unknown in advance.
Nevertheless, the next theorem shows that we can choose $\eta$
independent of $M$ to derive a good mistake bound.

\begin{theorem}
\label{th:main}
Algorithm~\ref{alg:main} with $\eta = c \gamma^2$ for some
$c > 0$ achieves
\begin{equation}
\label{eq:mbound}
	M = \sum_{t=1}^T \mathbb{I}_{\hat{y}_{t} \neq y_{t}}
	= O\left( \frac{\widehat{\mathcal{D}}}{\gamma^2} \right) + 2 \mathrm{hloss}(\mathcal{S}, \gamma).
\end{equation}
\end{theorem}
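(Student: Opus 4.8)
The plan is to chain the two reductions with the FTRL regret bound and then exploit the mistake-driven bookkeeping to obtain a self-bounding inequality for $M$. First I would apply Corollary~\ref{cor:reduction}, which already packages Lemma~\ref{lem:reduction} and Lemma~\ref{lem:reduction2}. The key observation is that the reduction invokes the underlying OSDP algorithm only on the rounds where a mistake occurs, so its effective horizon is $M$, not $T$; hence
\[
 M \le \sup_{\bm{W}^* \in \mathcal{K}} \mathrm{Regret}_\mathrm{OSDP}(M, \mathcal{K}, \mathcal{L}, \bm{W}^*) + \mathrm{hloss}(\mathcal{S}, \gamma).
\]

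Next I would instantiate Theorem~\ref{theorem:upper-bound-of-OSDP} (equivalently Equation~(\ref{eq:regret})) for the OSDP instance $(\mathcal{K},\mathcal{L})$ of (\ref{eq:mathcalK})--(\ref{eq:calL}), which amounts to checking the hypotheses: each loss matrix $\bm{L}_t = -\tfrac{y_t}{\gamma}\bm{Z}_t$ has $\Vert\mathrm{vec}(\bm{L}_t)\Vert_1 = 1/\gamma$, so $g = 1/\gamma$; every $\bm{W}\in\mathcal{K}$ has $\bm{W}_{ii} \le 1$, so $\beta = 1$; the $\bm{\Gamma}$ of (\ref{equation:side-information}) gives $\rho = 1$; and with $\epsilon = 1$ and $\tau = \widehat{\mathcal{D}}$ we have $H_0 \le \tau/\epsilon = \widehat{\mathcal{D}}$, while the strong-convexity constant of the generalised log-determinant regularizer is $s = \Theta(\gamma^2)$. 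Plugging these into $\mathrm{Regret}_\mathrm{OSDP} \le H_0/\eta + (\eta/s)\cdot(\text{horizon})$ with horizon $M$ gives
\[
 \mathrm{Regret}_\mathrm{OSDP}(M, \mathcal{K}, \mathcal{L}, \bm{W}^*) \le \frac{\widehat{\mathcal{D}}}{\eta} + \frac{c_0\,\eta}{\gamma^2}\,M
\]
for an absolute constant $c_0$.

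Then I would substitute $\eta = c\gamma^2$ and combine the two displays: $M \le \widehat{\mathcal{D}}/(c\gamma^2) + c_0 c\,M + \mathrm{hloss}(\mathcal{S},\gamma)$. Choosing $c \le 1/(2c_0)$ forces the coefficient of $M$ on the right to be at most $1/2$, and rearranging yields $M \le 2\widehat{\mathcal{D}}/(c\gamma^2) + 2\,\mathrm{hloss}(\mathcal{S},\gamma) = O(\widehat{\mathcal{D}}/\gamma^2) + 2\,\mathrm{hloss}(\mathcal{S},\gamma)$, which is (\ref{eq:mbound}).

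The genuinely delicate point is the self-referential nature of the estimate: because the OSDP algorithm is run exactly $M$ times, $M$ appears on both sides, and the argument closes only if the multiplicative factor in front of $M$ on the right can be driven below $1$ by the choice of step size. This is exactly why $\eta$ must be taken proportional to $\gamma^2$ and independent of the (unknown) horizon — the ``optimal'' tuning $\eta = \Theta(\sqrt{\gamma^2\widehat{\mathcal{D}}/M})$ for the raw regret would reintroduce $M$ in a way that blocks the rearrangement. Everything else is routine verification of the parameter instantiation for Theorem~\ref{theorem:upper-bound-of-OSDP} and bookkeeping of constants.
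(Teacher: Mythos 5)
Your proposal is correct and follows essentially the same route as the paper: combine Corollary~\ref{cor:reduction} with the regret bound (\ref{eq:regret}) instantiated at horizon $M$ (with $g=1/\gamma$, $\beta=\epsilon=\rho=1$, $\tau=\widehat{\mathcal{D}}$), set $\eta=c\gamma^2$ with $c$ small enough that the coefficient of $M$ on the right is at most $1/2$, and rearrange the resulting self-bounding inequality. Your remark that the horizon-independent tuning of $\eta$ is precisely what makes the rearrangement close is the same observation the paper makes before stating the theorem.
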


\begin{proof}
Combining Corollary~\ref{cor:reduction} and
the regret bound (\ref{eq:regret}), we have
\[
	M = O \left(
		\frac{M \eta}{\gamma^2} + \frac{\widehat{\mathcal{D}}}{\eta}
	\right) + \mathrm{hloss}(\mathcal{S},\gamma).
\]
Choosing $\eta = c \gamma^2$ for sufficiently small constant $c$,
we get
\[
	M \leq \frac{M}{2} + O\left( \frac{\widehat{\mathcal{D}}}{\gamma^2} \right)
		+ \mathrm{hloss}(\mathcal{S},\gamma),
\]
from which (\ref{eq:mbound}) follows.
\qed
\end{proof}

Again if the side information is vacuous, which means that $\bm{M}, \bm{N}$ are identity matrices, from Remark \ref{rem:original-case} and Theorem \ref{th:main}, we can set that $\widehat{\mathcal{D}}=m+n$ and obtain the mistake bound as follows:
$$O\left( \frac{m+n}{\gamma^2} + 2 \mathrm{hloss}_{\bm{P}\bm{Q}^{T} \in \mathbb{R}^{m \times n}}(\mathcal{S},(\bm{P},\bm{Q}), \gamma)\right).$$

In contrast, there is a case where side information matters non-trivially. Especially, if $\bm{U}$ contains $(k \times l)$-biclustered structure(the details are in Appendix) then we obtain that $\widehat{\mathcal{D}} \in O(k+l),$ which is strictly smaller than $O(m+n).$

Note that in realizable case, our mistake bound becomes $O\left( \frac{\widehat{\mathcal{D}}}{\gamma^2} \right),$ which improves the previous bound $O\left(\frac{\widehat{\mathcal{D}}}{\gamma^{2}} \ln (m+n)\right)$ in \cite{herbster2020online}, removing the logarithmic factor $\ln(m+n).$ Furthermore, this bound matches the previously known lower bound of Herbster et.al. \cite{herbster2016mistake}. When $\bm{U}$ contains $(k \times l)$-biclustered structure ($k\geq l$), $\gamma$ can be set as $\gamma=\frac{1}{\sqrt{l}}$ and our regret bound becomes $O(l m).$ On the other hand, the lower bound of Herbster et.al. is $\Omega( lm).$ Thus, the mistake bound of Theorem \ref{th:main} is optimal.

\section{Application to online similarity prediction with side information}
In this section, we show that our reduction method and generalised log-determinant regularizer work in online similarity prediction with side information.

Let $G=(V,E)$ be an undirected graph with $n=|V|$ vertices and $m=|E|$ edges. Assign vertices to $K$ classes such that $\lbrace y_{1},\cdots, y_{n}\rbrace$ where $y_{i} \in \lbrace 1,\cdots, K\rbrace.$ On each round $t,$ for a given pair of vertices $(i_{t},j_{t})$ algorithm needs to predict whether they are in the same class denoted as $\hat{y}_{i_{t},j_{t}}.$ If they are in the same class then $y_{i_{t},j_{t}}=1,$ $y_{i_{t},j_{t}}=-1,$ otherwise. Our target is to give a bound of the prediction mistakes $M=\sum_{t=1}^{T}\mathbb{I}_{\hat{y}_{i_{t},j_{t}}\neq y_{i_{t},j_{t}}}.$

\begin{definition}
The set of cut-edges in $(G,y)$ is denoted as $\Phi^{G}(y)=\lbrace (i,j) \in E: y_{i}\neq y_{j}\rbrace$ we abbreviate it to $\Phi^{G}$ and the cut-size is given as $|\Phi^{G}(y)|.$ The set of cut-edges with respect to class label $k$ is denoted as $\Phi_{s}^{G}(y)=\lbrace (i,j) \in E: s\in \lbrace y_{i},y_{j}\rbrace, y_{i} \neq y_{j}\rbrace.$ Note that $\sum_{s=1}^{k}|\Phi^{G}_{s}(y)|=2|\Phi^{G}(y)|.$ Given $\bm{A} \in \mathbb{R}^{n \times n}$ such that $A_{ij}=A_{ji}=1$ if $(i,j) \in E(G)$ and $A_{ij}=0$ otherwise. $\bm{D}$ is denoted as diagonal matrix with $\bm{D}_{ii}$ is the degree of vertex $i.$ We define the Laplacian as $\bm{L}=\bm{D}-\bm{A}.$
\end{definition}

\begin{definition}
If $G$ is identified with a resistive network such that each edge is a unit resistor, then the effective resistance $R^{G}_{i,j}$ between pair $(i,j) \in V^{2}$ can be defined as $R^{G}_{i,j}=(e_{i}-e_{j})\bm{L}^{+}(e_{i}-e_{j}),$ where $e_{i}$ is the $i$-th vector in the canonical basis of $\mathbb{R}^{n}.$
\end{definition}

\cite{gentile2013online} gave a mistake bound as $M \leq O\left(\frac{|\Phi^{G}|\max_{(i,j) \in V^{2}}R^{G}_{i,j}}{\gamma^{2}} \ln n\right).$

If we utilise our reduction method, same as in previous main part here we denote that $\bar{\bm{P}},\bar{\bm{Q}} \in \mathcal{B}^{n,k}$ and $\bm{P}\bm{Q}^{T}=\gamma \bm{U},$ where $\bm{U}$ is the potential online matrix for similarity prediction. Therefore we give the decision set as
\begin{equation}
\bm{W}_{\bm{P},\bm{Q}} =
		\begin{bmatrix}
			\bar{\bm{P}} \\
			\bar{\bm{Q}}
		\end{bmatrix}
		\begin{bmatrix}
			\bar{\bm{P}}^{T} & \bar{\bm{Q}}^{T}
		\end{bmatrix}
	=
		\begin{bmatrix}
			\bar{\bm{P}}\bar{\bm{P}}^{T} & \bar{\bm{P}}\bar{\bm{Q}}^{T} \\
			\bar{\bm{Q}}\bar{\bm{P}}^{T} & \bar{\bm{Q}}\bar{\bm{Q}}^{T}
		\end{bmatrix}.
\end{equation}

Side information is given as PD-Laplacian $\bar{\bm{L}}$ from Laplacian $\bm{L}$ of graph $G,$ thus we have
\begin{equation}
\bm{\Gamma} =
\begin{bmatrix}
\sqrt{\mathcal{R}_{\bar{\bm{L}}}\bar{\bm{L}}} & 0\\
0 & \sqrt{\mathcal{R}_{\bar{\bm{L}}}\bar{\bm{L}}}
\end{bmatrix}
\end{equation}

Meanwhile given sparse matrix $\bm{Z}_{t}$ in following equation
\begin{equation}
\bm{Z}_{t}=\frac{1}{2}(e_{i}e^{T}_{n+j}+e_{n+j}e^{T}_{i}),
\end{equation}
and $c \in \lbrace -1/\gamma, 1/\gamma \rbrace.$

Hence we can give the reduced generalised OSDP problem $(\mathcal{K},\mathcal{L})$ with bounded $\bm{\Gamma}$-trace norm as follows:
\begin{equation*}
\begin{split}
&\mathcal{K}=\left\{\bm{X} \in \mathbb{S}_{++}^{n \times n}: |\bm{X}_{ii}| \leq 1, \mathrm{Tr}(\bm{\Gamma  X\Gamma}) \leq \widehat{\mathcal{D}}\right\} \\
&\mathcal{L}=\left\{c\bm{Z}\langle i,j \rangle: : c \in \{-1/\gamma, 1/\gamma\}, i \in [n], j \in [n] \right\},
\end{split}
\end{equation*}
where $\bm{\Gamma}$ is defined as above.

According to \cite{herbster2020online} if
$\bm{U}$ obtains the $(k,k)$-biclustered structure, and $\bm{U}=\bm{RU^{*}R^{T}},$ we have that
$\mathrm{Tr}(\bm{\Gamma X \Gamma}) \leq \mathrm{Tr}(\mathcal{R}_{\bar{\bm{L}}}(\bm{R^{T}U^{*}R})) \leq 2\mathrm{Tr}(\bm{R}^{T}\bm{LR})\mathcal{R}_{\bm{L}}+2k \leq O(k).$ Moreover we have that
\begin{equation}
\begin{split}
M & \leq O\left( \frac{\mathrm{Tr}(\bm{R}^{T}\bm{LR})\mathcal{R}_{\bm{L}}}{\gamma^{2}}  \right)+\min_{\bar{\bm{P}}\bar{\bm{Q}}^{T} \in \mathcal{X}}\sum_{t=1}^{T}h_{\gamma}(y_{i_{t},j_{t}}(\bm{P}\bm{Q}^{T})_{i_{t},j_{t}})\\
&\leq O\left( \frac{k}{\gamma^{2}}  \right)+\min_{\bar{\bm{P}}\bar{\bm{Q}}^{T} \in \mathcal{X}}\sum_{t=1}^{T}h_{\gamma}(y_{i_{t},j_{t}}(\bm{P}\bm{Q}^{T})_{i_{t},j_{t}}),
\end{split}
\end{equation}
where $\mathcal{X}=\lbrace \bar{\bm{P}}\bar{\bm{Q}}^{T}: \bm{PQ}^{T} \in \mathbb{R}^{n \times n}: \mathcal{R}_{\bar{\bm{L}}}\mathrm{Tr}(\bar{\bm{P}}^{T}\bar{\bm{L}}\bar{\bm{P}})+\mathcal{R}_{\bar{\bm{L}}}\mathrm{Tr}(\bar{\bm{Q}}^{T}\bar{\bm{L}}\bar{\bm{Q}}) \leq \widehat{\mathcal{D}} \rbrace$ for some $\widehat{\mathcal{D}} \geq \mathcal{D}_{\bar{\bm{L}}, \bar{\bm{L}}}^{\gamma}(\bm{U}).$

\begin{remark}
According to \cite{herbster2020online}, we have that $\mathrm{Tr}(\mathcal{R}_{\bar{\bm{L}}}(\bm{R^{T}U^{*}R}))\leq 2\sum_{i,j}\Vert R_{i}-R_{j}\Vert^{2}+2k,$ where $\sum_{(i,j) \in E}\Vert R_{i}-R_{j}\Vert^{2}$ counts only when there is a edge between different classes. Due to the definition of $|\Phi^{G}|,$ we have that $\sum_{(i,j)\in E}\Vert R_{i}-R_{j} \Vert^{2}=|\Phi^{G}|.$ On the other hand, $\mathcal{R}_{\bm{L}}=\max_{ii}\bm{L}^{+}$ so we obtain that $\mathcal{R}_{\bm{L}} \geq e_{i}^{T}\bm{L}^{+}e_{i}, \forall i\in [k].$ It implies that $2\mathcal{R}_{\bm{L}} \geq \max_{(i,j) \in V^{2}}R^{G}_{i,j}.$
\end{remark}

\section{Conclusion}
In this paper, on the one hand we define a generalised OSDP problem with bounded $\bm{\Gamma}$-trace norm. To solve this problem, we involve FTRL with generalised log-determinant regularizer and achieve regret bound as $O((1+\rho)g\sqrt{\beta \tau T}).$ On the other hand, we utilise our result to OMC with side information particularly. We reduce OMC with side information to our new OSDP with bounded $\bm{\Gamma}$-trace norm, and obtain a tighter mistake bound than previous work by removing logarithmic factor.

\bibliographystyle{splncs04}
\bibliography{ref}
\section{Appendix}
\begin{lemma}\cite{hazan201210}\label{lemma:lemma-for-upper-bound-of-strongly-convex}
Let $R: \mathcal{K}\rightarrow \mathbb{R}$ be $s$-strongly convex with respect to $\mathcal{L}$ for $\mathcal{K}.$ Then the FTRL with the regularizer $R$ applied to $(\mathcal{K},\mathcal{L})$ achieves
\begin{equation}
\mathrm{Regret_{OSDP}}(T,\mathcal{K},\mathcal{L},\bm{W}^{*})  \leq \frac{H_{0}}{\eta}+\frac{\eta}{s}T,
\end{equation}
where $H_{0}=\max_{\bm{W},\bm{W}^{'} \in \mathcal{K}}(R(\bm{W})-R(\bm{W}^{'})),$ $\bm{W}^{*}$ is the static optimal solution of $\sum_{t=1}^{T}\bm{L}_{t} \bullet \bm{W}.$
In particular if we choose $\eta=\sqrt{sH_{0}/T}$ then we have
\begin{equation}
\mathrm{Regret_{OSDP}}(T,\mathcal{K},\mathcal{L},\bm{W}^{*})  \leq 2\sqrt{\frac{H_{0}T}{s}}.
\end{equation}
\end{lemma}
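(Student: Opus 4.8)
The plan is to follow the standard two-part analysis of FTRL: a \emph{be-the-leader} telescoping step that isolates a per-round stability term, followed by a bound on that term obtained from the $s$-strong convexity of $R$ with respect to $\mathcal{L}$ (Definition \ref{definition:s-strongly-convex-over-KL}). Write $\Phi_t(\bm{W}) = R(\bm{W}) + \eta\sum_{s=1}^{t}\bm{L}_s \bullet \bm{W}$, so that the FTRL iterate of (\ref{algorithm:FTRL}) is $\bm{W}_t = \argmin_{\bm{W}\in\mathcal{K}}\Phi_{t-1}(\bm{W})$ and in particular $\bm{W}_1 = \argmin_{\bm{W}\in\mathcal{K}} R(\bm{W})$.

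First I would apply the follow-the-leader/be-the-leader inequality to the functions $f_0 = R$ and $f_t = \eta\,\bm{L}_t\bullet(\cdot)$ for $t\ge 1$. Since $\bm{W}_{t+1}$ minimises $f_0+\cdots+f_t$, this yields for every competitor $\bm{W}^{*}\in\mathcal{K}$
\[
\eta\sum_{t=1}^{T}\bm{L}_t \bullet \bm{W}_{t+1} - \eta\sum_{t=1}^{T}\bm{L}_t \bullet \bm{W}^{*} \le R(\bm{W}^{*}) - R(\bm{W}_1).
\]
Rewriting the regret in terms of $\bm{W}_{t+1}$ (by adding and subtracting $\eta\sum_t\bm{L}_t\bullet\bm{W}_{t+1}$) and dividing by $\eta$ gives
\[
\mathrm{Regret}_{\mathrm{OSDP}}(T,\mathcal{K},\mathcal{L},\bm{W}^{*}) \le \frac{R(\bm{W}^{*}) - R(\bm{W}_1)}{\eta} + \sum_{t=1}^{T}\bm{L}_t \bullet (\bm{W}_t - \bm{W}_{t+1}),
\]
and bounding $R(\bm{W}^{*}) - R(\bm{W}_1) \le H_0$ controls the first term by $H_0/\eta$.

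The core of the argument is the per-round stability bound $\bm{L}_t \bullet (\bm{W}_t - \bm{W}_{t+1}) \le \eta/s$. Because $\Phi_{t-1}$ and $\Phi_t$ differ only by the linear term $\eta\,\bm{L}_t\bullet(\cdot)$, both inherit the strong convexity of $R$ along the direction $\bm{L}_t\in\mathcal{L}$; specialising Definition \ref{definition:s-strongly-convex-over-KL} at the minimiser (take $\bm{X}$ generic, $\bm{Y}$ the minimiser, and let $\alpha\to 0$) gives, applied to $\Phi_{t-1}$ at $\bm{W}_t$ and to $\Phi_t$ at $\bm{W}_{t+1}$,
\[
\Phi_{t-1}(\bm{W}_{t+1}) \ge \Phi_{t-1}(\bm{W}_t) + \tfrac{s}{2}\bigl(\bm{L}_t\bullet(\bm{W}_t-\bm{W}_{t+1})\bigr)^2,
\]
\[
\Phi_{t}(\bm{W}_{t}) \ge \Phi_{t}(\bm{W}_{t+1}) + \tfrac{s}{2}\bigl(\bm{L}_t\bullet(\bm{W}_t-\bm{W}_{t+1})\bigr)^2.
\]
Adding these and substituting $\Phi_t = \Phi_{t-1} + \eta\,\bm{L}_t\bullet(\cdot)$ cancels the common $\Phi_{t-1}$ terms and collapses everything to $\eta\,a \ge s\,a^2$ with $a := \bm{L}_t\bullet(\bm{W}_t-\bm{W}_{t+1})$, forcing $0\le a\le \eta/s$. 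Summing over $t$ bounds the stability sum by $\eta T/s$, so combining with the first part yields $\mathrm{Regret}_{\mathrm{OSDP}} \le H_0/\eta + \eta T/s$. Substituting $\eta=\sqrt{sH_0/T}$, the minimiser of this convex expression in $\eta$, then gives the stated $2\sqrt{H_0 T/s}$.

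The step I expect to be the main obstacle is the per-round stability bound, because the paper's strong-convexity notion is nonstandard: it is measured only along the loss directions $\bm{L}\in\mathcal{L}$ rather than in a fixed norm, so I must verify that the convex combination $\alpha\bm{W}_{t+1}+(1-\alpha)\bm{W}_t$ remains in $\mathcal{K}$ (convexity of the decision set, needed for the minimiser inequality) and that the degenerate seminorm $|\bm{L}_t\bullet(\cdot)|$ still supports the $\alpha\to 0$ limiting argument. Checking that the linear perturbation $\eta\,\bm{L}_t\bullet(\cdot)$ preserves the constant $s$ is immediate, but the sign analysis of $\eta a \ge s a^2$ — in particular ruling out $a<0$ — must be stated explicitly to conclude $a\le \eta/s$.
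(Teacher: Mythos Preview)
The paper does not prove this lemma; it is stated in the appendix as a cited result from \cite{hazan201210} with no proof given. Your argument is the standard FTRL stability analysis (be-the-leader telescoping plus a quadratic-growth bound from strong convexity), and it is correct as written; in particular your derivation of $\eta a \ge s a^2$ already forces $a\ge 0$ (since $a<0$ would make the left side negative and the right side positive), so the sign issue you flagged resolves itself.
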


\begin{lemma}[Lemma A.1 \cite{moridomi2018online}]\label{lemma:A-1}
Let $P$ and $Q$ be probability distribution over $\mathbb{R}^{V}$ and $\phi_{P}(u)$ and $\phi_{Q}(u)$ be their characteristic functions, respectively. Then
\begin{equation}
\max_{u \in \mathbb{R}^{N}}|\phi_{P}(u)-\phi_{Q}(u)| \leq \int_{x} |P(x)-Q(x)| dx,
\end{equation}
the right hand side is the total variation distance between any distribution $Q$ and $P.$
\end{lemma}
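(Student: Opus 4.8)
The plan is to unfold the definition of the characteristic functions and reduce the claim to the elementary triangle inequality for integrals of complex-valued functions. Recall from the preliminaries that $\phi_{P}(u) = \mathbb{E}_{x \sim P}[e^{iu^{T}x}] = \int_{x} e^{iu^{T}x} P(x)\,dx$, and similarly for $Q$. The first step is to write the difference of the two characteristic functions as a single integral against the signed function $P - Q$:
\[
\phi_{P}(u) - \phi_{Q}(u) = \int_{x} e^{iu^{T}x}\bigl(P(x) - Q(x)\bigr)\,dx.
\]

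The key step is then to bound the modulus of this complex integral by the integral of the modulus of its integrand, using $\left|\int f\right| \le \int |f|$ for complex-valued integrable $f$. Since the exponential factor satisfies $|e^{iu^{T}x}| = 1$ for every real $u$ and $x$, the modulus of the integrand is exactly $|P(x) - Q(x)|$, which yields
\[
|\phi_{P}(u) - \phi_{Q}(u)| \le \int_{x} |e^{iu^{T}x}|\,|P(x)-Q(x)|\,dx = \int_{x} |P(x) - Q(x)|\,dx.
\]

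Crucially, this bound is \emph{uniform} in $u$: the right-hand side does not depend on $u$ at all. Hence taking the supremum (maximum) over $u \in \mathbb{R}^{N}$ on the left preserves the inequality, which is precisely the claimed bound by the total variation distance. I do not expect any genuine obstacle here, since the argument is a one-line application of the triangle inequality. The only point requiring a word of care is the justification of $\left|\int f\right| \le \int |f|$ for a complex-valued function, which follows from the real-valued case by picking a phase $\theta$ so that $\left|\int f\right| = \mathrm{Re}\bigl(e^{-i\theta}\int f\bigr) = \int \mathrm{Re}\bigl(e^{-i\theta} f\bigr) \le \int |f|$.
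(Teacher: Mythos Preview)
Your argument is correct and is exactly the standard one-line proof: write $\phi_{P}(u)-\phi_{Q}(u)=\int e^{iu^{T}x}(P(x)-Q(x))\,dx$, apply $|\!\int f|\le\int|f|$, use $|e^{iu^{T}x}|=1$, and take the supremum over $u$. The paper itself does not prove this lemma; it simply quotes it from \cite{moridomi2018online}, so there is nothing further to compare against.
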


\begin{lemma}[Lemma A.2 \cite{christiano2014online}]\label{lemma:A-2}
Let $P$ and $Q$ be probability distributions over $\mathbb{R}^{N}$ with total variation distance $\delta.$ Then
\begin{equation}
H(\alpha P+(1-\alpha)Q) \leq \alpha H(P)+(1-\alpha)H(Q)-\alpha(1-\alpha)\delta^{2},
\end{equation}
where $H(P)=\mathbb{E}_{x \sim P}[\ln P(x)].$
\end{lemma}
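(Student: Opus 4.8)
To establish Lemma~\ref{lemma:A-2}, the plan is to recognise the ``convexity gap'' of the functional $H$ along the segment between $P$ and $Q$ as a weighted sum of relative entropies, and then to lower bound each relative entropy by Pinsker's inequality, using that each of $P$ and $Q$ sits at total variation distance a multiple of $\delta$ from the mixture $R=\alpha P+(1-\alpha)Q$.

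First I would record the elementary identity
\begin{equation*}
\alpha H(P)+(1-\alpha)H(Q)-H(\alpha P+(1-\alpha)Q) = \alpha D(P\,\|\,R)+(1-\alpha)D(Q\,\|\,R) \ge 0 ,
\end{equation*}
where $R=\alpha P+(1-\alpha)Q$ and $D(\mu\,\|\,\nu)=\int \mu\ln(\mu/\nu)$ denotes relative entropy in nats. This follows by writing $H(R)=\int R\ln R=\alpha\int P\ln R+(1-\alpha)\int Q\ln R$ (linearity of $R$ in its mixing weights) and regrouping; the nonnegativity is Gibbs' inequality, which already gives convexity of $H$. It then remains to show the gap is at least $\alpha(1-\alpha)\delta^{2}$.

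Next I would apply Pinsker's inequality $D(\mu\,\|\,\nu)\ge 2\,\mathrm{TV}(\mu,\nu)^{2}$, with $\mathrm{TV}(\mu,\nu)=\tfrac12\int|\mu-\nu|$ to match the normalisation used in this paper, to each of the two divergences. Since $P-R=(1-\alpha)(P-Q)$ and $Q-R=-\alpha(P-Q)$, we get $\mathrm{TV}(P,R)=(1-\alpha)\delta$ and $\mathrm{TV}(Q,R)=\alpha\delta$, hence
\begin{equation*}
\alpha D(P\,\|\,R)+(1-\alpha)D(Q\,\|\,R) \ge 2\alpha(1-\alpha)^{2}\delta^{2}+2(1-\alpha)\alpha^{2}\delta^{2} = 2\alpha(1-\alpha)\delta^{2} \ge \alpha(1-\alpha)\delta^{2} .
\end{equation*}
Combined with the identity of the previous step this is precisely the asserted inequality, in fact with the improved constant $2\alpha(1-\alpha)\delta^{2}$.

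The argument has no genuine obstacle; the only points deserving a line of justification are that $H(P)$ and $H(Q)$ are assumed finite — automatic in the intended use, where $P$ and $Q$ are nondegenerate Gaussians — so that the regrouping in the identity is legitimate, and that Pinsker's inequality is invoked in the natural-logarithm form consistent with the paper's definition of total variation distance.
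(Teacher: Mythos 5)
Your proof is correct. Note that the paper does not actually prove this statement: it is imported verbatim as Lemma A.2 of Christiano's work via citation, so there is no in-paper argument to compare against. Your route --- the identity $\alpha H(P)+(1-\alpha)H(Q)-H(R)=\alpha D(P\,\Vert\,R)+(1-\alpha)D(Q\,\Vert\,R)$ with $R=\alpha P+(1-\alpha)Q$ (the Jensen--Shannon decomposition), followed by Pinsker's inequality $D(\mu\,\Vert\,\nu)\ge 2\,\mathrm{TV}(\mu,\nu)^2$ and the exact computations $\mathrm{TV}(P,R)=(1-\alpha)\delta$, $\mathrm{TV}(Q,R)=\alpha\delta$ --- is the standard way to prove such a strong-convexity statement for negative entropy, and it checks out line by line against the paper's conventions ($H$ is \emph{negative} entropy, so the convexity direction is right, and $\delta$ is the half-$L_1$ total variation, matching the Pinsker normalisation you use). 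You even obtain the improved constant $2\alpha(1-\alpha)\delta^2$. Your caveat about finiteness is the right one to flag; it is worth adding that $D(P\,\Vert\,R)\le\ln(1/\alpha)$ and $D(Q\,\Vert\,R)\le\ln(1/(1-\alpha))$ hold automatically since $R\ge\alpha P$ and $R\ge(1-\alpha)Q$ pointwise, so the only finiteness genuinely needed for the regrouping is that of the differential entropies themselves, which holds for the nondegenerate Gaussians to which the lemma is applied.
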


\begin{lemma}[Lemma A.3 \cite{moridomi2018online}]\label{lemma:A-3}
For any probability distribution $P$ over $\mathbb{R}^{V}$ with zero mean and covariance matrix $\Sigma$ its entropy is bounded by the log-determinant of covariance matrix. That is
\begin{equation}
-H(P) \leq \frac{1}{2}\ln (\det (\Sigma)(2\pi e)^{V}).
\end{equation}
\end{lemma}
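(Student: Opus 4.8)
The plan is to recognize the claim as the classical maximum-entropy property of the Gaussian: among all zero-mean distributions on $\mathbb{R}^{V}$ with a fixed covariance $\Sigma$, the Gaussian has the largest differential entropy, and that entropy equals $\tfrac{1}{2}\ln(\det(\Sigma)(2\pi e)^{V})$. Since $-H(P)$ is precisely the differential entropy of $P$ (recall $H(P)=\mathbb{E}_{x\sim P}[\ln P(x)]$ is the \emph{negative} entropy), the stated inequality is exactly this extremal property. I would first dispose of the degenerate case: if $P$ admits no density with respect to Lebesgue measure on $\mathbb{R}^{V}$, then by convention $-H(P)=-\infty$ and the bound holds trivially, so I assume henceforth that $\Sigma$ is strictly positive definite and that $P$ has a density $p$.

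Next I introduce the comparison Gaussian $G$ with density $g(x)=(2\pi)^{-V/2}(\det\Sigma)^{-1/2}\exp(-\tfrac12 x^{T}\Sigma^{-1}x)$, the zero-mean Gaussian sharing the covariance $\Sigma$ of $P$. The engine of the proof is the non-negativity of the Kullback--Leibler divergence (Gibbs' inequality): $\int p(x)\ln\frac{p(x)}{g(x)}\,dx\ge 0$, which rearranges to $H(P)=\int p\ln p \ge \int p\ln g=\mathbb{E}_{x\sim P}[\ln g(x)]$.

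The decisive step is that the cross term $\mathbb{E}_{x\sim P}[\ln g(x)]$ depends on $P$ only through its second moments, which coincide with those of $G$. Explicitly, $\ln g(x)=-\tfrac{V}{2}\ln(2\pi)-\tfrac12\ln\det\Sigma-\tfrac12 x^{T}\Sigma^{-1}x$, and the trace trick gives $\mathbb{E}_{x\sim P}[x^{T}\Sigma^{-1}x]=\mathrm{Tr}(\Sigma^{-1}\mathbb{E}_{x\sim P}[xx^{T}])=\mathrm{Tr}(\Sigma^{-1}\Sigma)=V$, using zero mean and covariance $\Sigma$. Hence $\mathbb{E}_{x\sim P}[\ln g(x)]=-\tfrac{V}{2}\ln(2\pi)-\tfrac12\ln\det\Sigma-\tfrac{V}{2}=-\tfrac12\ln(\det(\Sigma)(2\pi e)^{V})$. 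Combining with the Gibbs inequality yields $H(P)\ge -\tfrac12\ln(\det(\Sigma)(2\pi e)^{V})$, i.e. $-H(P)\le \tfrac12\ln(\det(\Sigma)(2\pi e)^{V})$, as required.

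The only real obstacle is regularity/integrability: ensuring the integrals defining the divergence and the cross term are well-defined and that $\Sigma$ is invertible. The invertibility is automatic in the intended applications, since the covariances arising in Lemma~\ref{lemma:primary-form-for-entropy} are of the form $\bm{\Gamma\Sigma\Gamma}$ with both factors strictly positive definite; the non-density case is handled by the trivial-bound remark above; and the finiteness $\mathbb{E}_{x\sim P}[x^{T}\Sigma^{-1}x]=V<\infty$ guarantees the cross term is finite, so the rearrangement of Gibbs' inequality is legitimate whenever $H(P)$ is finite (and when $H(P)=+\infty$ the claim is immediate).
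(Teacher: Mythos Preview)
Your proof is correct and is the standard argument: non-negativity of the Kullback--Leibler divergence $D(P\Vert G)\ge 0$ together with the trace identity $\mathbb{E}_{x\sim P}[x^{T}\Sigma^{-1}x]=\mathrm{Tr}(\Sigma^{-1}\Sigma)=V$ to evaluate the cross entropy. The paper itself gives no proof of this lemma --- it is only quoted in the appendix as a known result from \cite{moridomi2018online} --- so there is nothing further to compare.
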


\begin{lemma}[Lemma A.4 \cite{moridomi2018online}]\label{lemma:A-4}
\begin{equation}
e^{\frac{-x}{2}}- e^{-\frac{1-x}{2}} \geq \frac{e^{-1/4}}{2}(1-2x),
\end{equation}
for $0 \leq x 1/2.$
\end{lemma}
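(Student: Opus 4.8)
The plan is to reduce the asserted inequality to the textbook fact $\sinh s \ge s$ for $s\ge 0$ via a symmetrising change of variable. First I would substitute $x = \tfrac12 - t$ with $t \in [0,\tfrac12]$. Then $-\tfrac{x}{2} = -\tfrac14 + \tfrac{t}{2}$ and $-\tfrac{1-x}{2} = -\tfrac14 - \tfrac{t}{2}$, so the left-hand side becomes $e^{-1/4}\bigl(e^{t/2} - e^{-t/2}\bigr) = 2e^{-1/4}\sinh(t/2)$, while $1-2x = 2t$, so the right-hand side becomes $e^{-1/4}\,t$. Dividing through by $e^{-1/4}>0$, the claim is equivalent to $2\sinh(t/2) \ge t$, i.e. to $\sinh s \ge s$ for $s = t/2 \ge 0$.

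That last inequality I would dispatch by either the power series $\sinh s = \sum_{k\ge 0} \tfrac{s^{2k+1}}{(2k+1)!} \ge s$ (all terms nonnegative for $s\ge 0$), or by the monotonicity argument: $\phi(s) := \sinh s - s$ has $\phi(0)=0$ and $\phi'(s) = \cosh s - 1 \ge 0$, hence $\phi(s)\ge 0$ on $[0,\infty)$. Tracing back through the substitution, equality holds at $t=0$, i.e. at $x = \tfrac12$, consistent with both sides vanishing there.

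As an alternative that avoids the substitution, I would set $g(x) = e^{-x/2} - e^{-(1-x)/2} - \tfrac{e^{-1/4}}{2}(1-2x)$, note $g(\tfrac12) = 0$, and compute $g'(x) = -\tfrac12\bigl(e^{-x/2} + e^{-(1-x)/2}\bigr) + e^{-1/4}$. Since the geometric mean of $e^{-x/2}$ and $e^{-(1-x)/2}$ is $e^{-1/4}$, AM--GM gives $\tfrac12\bigl(e^{-x/2} + e^{-(1-x)/2}\bigr) \ge e^{-1/4}$, so $g'(x) \le 0$ on all of $\mathbb{R}$; hence $g$ is non-increasing and $g(x) \ge g(\tfrac12) = 0$ for every $x \le \tfrac12$, which is the claim.

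I do not anticipate a genuine obstacle here: this is an elementary one-variable estimate. The only thing one has to spot is the symmetrisation $x \mapsto \tfrac12 - t$ (equivalently, the AM--GM bound on $e^{-x/2} + e^{-(1-x)/2}$), after which the statement collapses to $\sinh s \ge s$. I would also flag the apparent typo in the hypothesis, which should read $0 \le x \le \tfrac12$.
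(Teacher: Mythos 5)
Your proof is correct. Note that the paper does not actually prove this lemma --- it is imported verbatim as Lemma A.4 of \cite{moridomi2018online} --- so there is no in-paper argument to compare against; both of your routes (the symmetrising substitution $x=\tfrac12-t$ reducing the claim to $\sinh s\ge s$, and the monotonicity argument where AM--GM on $e^{-x/2}$ and $e^{-(1-x)/2}$ gives $g'(x)\le 0$ with $g(\tfrac12)=0$) are complete and valid, and you are right that the stated hypothesis should read $0\le x\le \tfrac12$.
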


\begin{lemma}[Lemma 5.4 \cite{moridomi2018online}]\label{lemma:lemma-for-strongly-convex}
Let $\bm{X}, \bm{Y} \in \mathbb{S}_{++}^{N \times N}$ be such that for all $i \in [N]$ $  |\bm{X}_{i,i}|\leq \beta^{'}$ and $|\bm{Y}_{i,i}| \leq \beta^{'}$ Then for any $\bm{L} \in \mathcal{L}=\lbrace \bm{L} \in \mathbb{S}^{N \times N}_{+}: \Vert \mathrm{vec}(\bm{L}) \Vert_{1}\leq g \rbrace$ there exists that
\begin{equation}
|\bm{X}_{i,j}-\bm{Y}_{i,j}| \geq \frac{|\bm{L} \bullet (\bm{X}-\bm{Y})|}{4\beta^{'}g}(\bm{X}_{i,i}+\bm{Y}_{i,i}+\bm{X}_{j,j}+\bm{Y}_{j,j}).
\end{equation}
\end{lemma}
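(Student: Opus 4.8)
I read the conclusion as an existential claim: there is a pair $(i,j)$ for which the displayed inequality holds (this is exactly the form needed when this lemma is later fed into Lemma~\ref{lemma:primary-form-for-entropy} via the Gaussian total-variation argument). The plan is to produce such a pair explicitly as the entry maximising $|\bm{X}_{i,j}-\bm{Y}_{i,j}|$, and then to control both sides by two elementary facts: $\ell_1$--$\ell_\infty$ duality for the Frobenius inner product, and the boundedness of the diagonal entries of positive semi-definite matrices.

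\textbf{Main steps.} Writing $\bm{D}=\bm{X}-\bm{Y}$, the first step is to bound the inner product by duality:
\[
	|\bm{L}\bullet \bm{D}|
	= \Bigl|\sum_{i,j}\bm{L}_{i,j}\bm{D}_{i,j}\Bigr|
	\leq \Vert \mathrm{vec}(\bm{L})\Vert_{1}\,\Vert \mathrm{vec}(\bm{D})\Vert_{\infty}
	\leq g\,\max_{i,j}|\bm{D}_{i,j}|,
\]
using $\Vert\mathrm{vec}(\bm{L})\Vert_1\leq g$ from the definition of $\mathcal{L}$. Next, I would let $(i^{*},j^{*})$ be any index pair attaining $\max_{i,j}|\bm{D}_{i,j}|$, so that the above inequality rearranges to $|\bm{D}_{i^{*},j^{*}}|\geq |\bm{L}\bullet\bm{D}|/g$. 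This already isolates a candidate pair; the remaining work is to show the extra diagonal factor on the right-hand side of the lemma only costs a factor $4\beta'$.

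\textbf{Controlling the diagonal factor.} Here I would invoke that $\bm{X},\bm{Y}\in\mathbb{S}^{N\times N}_{++}$, so all diagonal entries are nonnegative, and by hypothesis each is at most $\beta'$; hence $0\leq \bm{X}_{i,i},\bm{Y}_{i,i}\leq\beta'$ and in particular
\[
	\bm{X}_{i^{*},i^{*}}+\bm{Y}_{i^{*},i^{*}}+\bm{X}_{j^{*},j^{*}}+\bm{Y}_{j^{*},j^{*}}\leq 4\beta'.
\]
Combining with the previous step gives
\[
	|\bm{D}_{i^{*},j^{*}}|
	\;\geq\; \frac{|\bm{L}\bullet\bm{D}|}{g}
	\;\geq\; \frac{|\bm{L}\bullet\bm{D}|}{4\beta' g}
	\bigl(\bm{X}_{i^{*},i^{*}}+\bm{Y}_{i^{*},i^{*}}+\bm{X}_{j^{*},j^{*}}+\bm{Y}_{j^{*},j^{*}}\bigr),
\]
which is the claimed inequality at $(i^{*},j^{*})$.

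\textbf{Anticipated difficulty.} There is no substantial obstacle: the argument is a two-line duality bound plus a trivial diagonal estimate. The only points that require care are (i) that the existential reading is the intended one, so that choosing the argmax pair suffices rather than needing the bound for \emph{every} $(i,j)$, and (ii) using positive semi-definiteness to drop absolute values on the diagonal terms, which is what makes the four-term sum genuinely bounded by $4\beta'$ instead of requiring a separate sign analysis.
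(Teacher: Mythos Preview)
Your proof is correct. The paper does not supply its own proof of this lemma: it is stated in the appendix as a direct citation of Lemma~5.4 in \cite{moridomi2018online} with no accompanying argument, so there is nothing to compare against beyond the original source. Your argument---$\ell_1/\ell_\infty$ duality to pass from $|\bm{L}\bullet(\bm{X}-\bm{Y})|$ to $g\max_{i,j}|\bm{X}_{i,j}-\bm{Y}_{i,j}|$, then absorbing the diagonal sum via the trivial bound $4\beta'$---is exactly the standard route and is the one used in \cite{moridomi2018online}. Your reading of the conclusion as existential is also the intended one, as confirmed by how the paper invokes this lemma inside the proof of the Proposition on strong convexity (it only needs a single index pair to feed into Lemma~\ref{lemma:primary-form-for-entropy}).
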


\section{Appendix B. Biclustered structure and ideal case of quasi-dimension}
As in \cite{herbster2020online} we define $(k,l)$-biclustered structure as follows: For $m \geq k$ and $n \geq l,$
\begin{definition}
the class of $(k,l)$-binary biclustered matrices is defined as
\begin{equation*}
\begin{split}
\mathbb{B}^{m\times n}_{k,l}&=\lbrace U\in \lbrace -1,+1\rbrace^{m\times n}: \bm{r} \in [k]^{m},\bm{c}\in [l]^{n},V \in \lbrace 1,-1\rbrace^{k \times l},\\
 &U_{i,j}=V_{r_{i},c_{j}}, i \in [m], j \in [n]\rbrace.
\end{split}
\end{equation*}
\end{definition}

Denote $\mathcal{B}^{m,d}=\lbrace \bm{R} \subset \lbrace 0,1 \rbrace^{m \times d}: \Vert \bm{R}_{i} \Vert_{2}=1, i \in [m], \mathrm{rank}(\bm{R})=d \rbrace,$ for any matrix $\bm{U} \in \mathbb{B}^{m,n}_{k,l}$ we can decompose $\bm{U}=\bm{RU^{*}}\bm{C}^{T}$ for some $\bm{U}^{*} \in \lbrace -1,+1 \rbrace^{k \times l},\bm{R} \in \mathcal{R}^{m,k}$ and $\bm{C} \in \mathcal{B}^{n,l}.$ In \cite{herbster2020online} if the comparator matrix $\bm{U} \in \mathbb{B}^{m \times n}_{k,l},$ we know that $\mathcal{D}_{\bm{M},\bm{N}}^{\gamma} \leq 2\mathrm{Tr}(\bm{R}^{T}\bm{MR})\mathcal{R}_{\bm{M}}+2\mathrm{Tr}(\bm{C}^{T}\bm{NC})\mathcal{R}_{\bm{N}}+2k+2l,$ if $\bm{M,N}$ are PD-Laplacian and $\mathcal{D}_{\bm{M},\bm{N}}^{\gamma} \leq O(k+l).$

In graph-based semi-supervised learning \cite{herbster2020online}, for a given row corresponding to a vertex in the row graph. The weight of edge $(i,j)$ represents our prior belief that row $i$ and row $j$ share the same underlying factor. Hence we may build a graph based on vectorial data associated with the rows, for example, user demographics. Assume that we know the partition of $[m]$ vertices that maps rows to $k$ factors. The rows that share factors have an edge between them and there are no other edges. Therefore we have a graph with $k$ disjoint cliques. We assume that this graph has a structure that any pair of vertices in this graph can be connected with a path within 4 length. Choose the side information matrix $\bm{M}$ as PD-Laplacian $\bar{\bm{L}}$ of this row graph. For columns in a partition of $[n]$ with $l$ cliques, we do the same work. At last we obtain that the $\mathcal{D}_{\bm{M},\bm{N}}^{\gamma} \in O(k+l).$


\end{document}